 \newtheorem{theorem}{Theorem}[section]
 \newtheorem{lemma}{Lemma}[section]
 \newtheorem{remark}{Remark}[section]
 \numberwithin{equation}{section}
\newcommand{\nn}{\nonumber}
\def\e{\varepsilon}
\newcommand{\beq}{\begin{equation}}
\newcommand{\eeq}{\end{equation}}
 \def\non{\nonumber }
\def\bea{\begin{eqnarray}}
\def\eea{\end{eqnarray}}
\begin{document}
\title{Global Existence of Weak Solutions to a Signal-dependent Keller-Segel Model for Local Sensing Chemotaxis}
\author{Haixia Li\thanks{Innovation Academy for Precision Measurement Science and Technology, CAS,
Wuhan 430071, HuBei Province, P.R. China},
	\ Jie Jiang\thanks{Innovation Academy for Precision Measurement Science and Technology, CAS,
		Wuhan 430071, HuBei Province, P.R. China,
		\textsl{jiang@wipm.ac.cn, jiang@apm.ac.cn}.}}

\date{\today}

\maketitle

\begin{abstract}This paper is devoted to global existence of weak solutions to the following degenerate kinetic model of chemotaxis
	\begin{equation}
	\begin{cases}\label{chemo0}
	u_t=\Delta (\gamma (v)u)\\
	\tau v_{t}=\Delta v-v+u
	\end{cases}
	\end{equation}in a smooth bounded  domain with no-flux boundary conditions. The problem features a positive signal-dependent  motility function $\gamma(\cdot)$ which may vanish as $v$ becomes unbounded. In this paper, we first  modify the comparison approach developed recently in \cite{GM,GM2} to derive the upper bounds of $v$ under weakened assumptions on $\gamma(\cdot)$. Then by introducing a suitable approximation scheme which is compatible with the comparison method, we establish the global existence of weak solutions in any spatial dimension via compactness argument. Our weak solution has higher regularity than those obtained in previous literature \cite{Faguo,DKTY2019,Tao}.

\indent {\bf Keywords}: Weak solutions, degeneracy, comparison method, regularity, chemotaxis.\\
\end{abstract}

\section{Introduction}
In this paper,  we consider the initial-Neumann boundary value problem of the following chemotaxis system
\begin{equation}
\begin{cases}
u_{t}=\Delta (\gamma(v)u), & x\in\Omega,\;t>0 \\
\tau v_{t}=\Delta v-v+u    ,  & x\in\Omega,\;t>0 \\
\frac{\partial u}{\partial \nu}=\frac{\partial v}{\partial\nu}=0, & x\in \partial\Omega,\;t>0\\
u(x,0)=u_0(x), \tau v(x,0)=\tau v_0(x),& x\in\Omega\label{1-3}
\end{cases}
\end{equation}
where $ \Omega\subset R^{n}$ with $n\geq 1$ is a smooth bounded domain. Here, $u$ and $v$ represents the density of cells and the concentration of chemical signals, respectively. $\tau\geq0$ is a given constant. This system features a signal-dependent motility function $\gamma(v)$, which is positive and may vanish as $v$ tends to infinity.

This model has been recently adopted in \cite{Sciencs11,PRL12} to describe the formation of stripe patterns via the so-called "self-trapping" mechanism, where $\gamma$ is a positive and decreasing function.  Thus the macroscopic cellular motility is suppressed by the concentration of signals. On the other hand, this model is a special version of the following Keller--Segel  model of chemotaxis originally proposed by Keller \& Segel in  the seminal work \cite{KELLER}: 
\begin{equation}
\begin{cases}\label{1-1}
u_{t} = \nabla\cdot(D(v)\nabla u-\chi(v)u\nabla v)\\
v_t=\Delta v-v+u.
\end{cases}
\end{equation}
Here, $D(\cdot)$ and $\chi(\cdot)$ denote the signal-dependent diffusivity and chemo-sensitivity, respectively, which  are  linked through
\begin{equation}
\chi(v)=(\alpha-1)D'(v).\non
\end{equation}
The coefficient $\alpha$ here is a parameter that is related to the distance between the signal-receptors in a cell with a suitable scaling. 
When $\alpha>0$, movement of a cell occurs in response to the transported  signals via the so-called "gradient sensing" mechanism since it  can perceive the gradient of concentrations by comparing them at two different spots. If $\alpha=0$, there is a single receptor in the cell and hence the cell can only detect the  concentration at  one spot. Note that a direct expansion of the right hand side of the first equation in \eqref{1-3} corresponds to the above Keller--Segel model with $\alpha=0$ and $D=\gamma$. Therefore, our system \eqref{1-3}  models the chemotaxis movement due to the above mentioned "local sensing" mechanism.

 The mathematical analysis of problem \eqref{1-3}  has been carried out recently in several works. By presuming strictly positive upper and lower bounds on $\gamma$ and $|\gamma'|$,  Tao \& Winkler \cite{Tao}  investigated the problem when $\tau=1$, where  existence of global classical solutions in the two dimensions and global weak solutions in higher dimensions was proved. Note degeneracy was precluded due to their assumptions.
 
If the motility function vanishes as $v$ becomes unbounded, then degeneracy brings a severe difficulty in analysis. With a specified power type decreasing and asymptotically vanishing motility  $\gamma(v)=c_{0}v^{-k}$ with $c_{0}>0,k>0$, Yoon \& Kim \cite{Yoon} obtained the global existence of classical solutions with uniform-in-time bounds when $\tau=1$ and  $c_{0}$ is sufficiently small. If $\tau=0$ , Ahn et al \cite{Ahn} removed the above smallness assumption and  established the global existence of classical solutions with uniform-in-time bounds when $n\leq2$ for any $k>0$ or $n\geq3$ for $k<\frac{2}{n-2}$. When $\tau=1$ and $\gamma(v)=\frac{1}{c+v^{k}}$ with some $c\geq0,k>0,$ the global weak solution was obtained for all $k>0$ if $n=1$, for $0<k<2$ if $n=2$, and for $0<k<\frac{4}{3}$ if $n=3$  in \cite{DKTY2019}. There are also many study concerning with global existence of system \eqref{1-3} with the presence of logistic source terms by replacing the first equation by
\begin{equation*}
	u_t=\Delta(\gamma(v)u)+\mu u(1-u)
\end{equation*}
with some $\mu>0$. We refer the readers to \cite{Jin,GM2,LX2019,WW} for more detail.
 
In the work mentioned above, global classical solution was proved basically via conventional energy method. In fact, in order to prevent degeneracy one needs to deduce an upper bound estimate for $v$. To this aim, an indirect way adopted there was to establish the $L^\infty_tL^p_x-$boundedness of $u$, which will give rise to boundedness of $v$ according to the second equation by classical regularity theories. However, this idea seems only work for above mentioned special cases. 

Recently, based on a careful observation of the delicate structure of system \eqref{1-3}, Fujie \& Jiang \cite{GM,GM2}  proposed a new comparison method such that an explicit point-wise upper bound estimate of $v$ was derived for a very generic motility function, that is
\begin{equation}\label{gamma0a}
\gamma(\cdot)\in C^3[0,+\infty),\;\gamma(\cdot)>0,\;\;\gamma'(\cdot)\leq0\;\;\text{on}\;(0,+\infty)
\end{equation}
and additionally if $\tau>0$, we need  the following asymptotically vanishing property:
\begin{equation}\label{gamma}
\lim\limits_{s\rightarrow+\infty}\gamma(s)=0.
\end{equation}
 The key idea was to introduce  an auxiliary elliptic problem that enjoys the comparison principle. Then global existence of classical solution with a generic motility was proved in two dimensions and uniform-in-time boundedness was further discussed when $\gamma$ satisfied certain decay rate assumptions; see also \cite{FJ3}. Moreover, a new  critical mass phenomenon  was discovered in 2D when $\gamma(v)=e^{-v}$ thanks to its energy-dissipation structure. It was proved that in the sub-critical case, the global solution is uniformly-in-time bounded while blowup is verified to take place in the super-critical case only at time infinity. Such a delayed blowup behavior of local sensing chemotaxis is distinct from the finite-time blowup phenomenon due to gradient sensing mechanism (see also \cite{Faguo}).  We mention that in the case $\gamma(v)=e^{-v}$,  global boundedness with sub-critical mass and possible blowup at an undetermined  time (finite or infinite) in the super-critical case was also proved in \cite{JW20}. Shortly afterwards, Burger et al \cite{Faguo} also verified that blowup must occur at time infinity by duality method and moreover, weak solution was obtained in any dimension when $\gamma(v)=e^{-v}$. Their method relied on an introduction of an adequate approximation procedure that conserves an dissipative energy and the duality structure.

In the present contribution, we aim to establish global existence of weak solutions for a generic motility function in any dimension. For simplicity, we assume that
\begin{equation}
(u_0,v_0)\in L^\infty(\Omega)\times{L^{\infty}\cap H^1(\Omega)},\quad u_0\geq0,\; v_0\geq0. \label{1-4}
\end{equation}
As to the motility, we  require that
\begin{equation}
	\gamma(\cdot)\in C^{3}[0,\infty), \; 0<\gamma(\cdot)\leq K_\gamma \quad \text{on} \;(0,+\infty) \label{1-5}
\end{equation}with some constant $K_\gamma>0.$ Note we remove the decreasing and the asymptotically vanishing properties on $\gamma$ in \eqref{gamma0a} and \eqref{gamma} but assume additionally it is bounded from above, which is certainly satisfied in the case $\gamma'\leq0.$

The main result of the present work on global existence of weak solutions is stated as follows.
\begin{theorem}\label{1.1}
 Suppose that $(u_{0},v_{0})$ satisfies \eqref{1-4}. Assume that $\gamma$ satisfies \eqref{1-5} and  $0\leq \tau<1/K_\gamma$. Let $T>0$. Then problem \eqref{1-3} possesses at least one non-negative global weak solution $(u,v)$ satisfying
 \begin{equation}
 \begin{cases}\label{regularity}
 u\in L^\infty(0,T;L^1(\Omega)\cap H^{-1}(\Omega))\cap L^{2}(0,T;L^{2}(\Omega)) \cap  L^{\frac{4}{3}}(0,T;W^{1,\frac{4}{3}}(\Omega)),\\
 v\in L^{\infty}(0,T;L^{\infty}(\Omega)) \cap L^{2}(0,T;H^2(\Omega)),\;\;\sqrt{\tau} v\in L^{\infty}(0,T;H^{1}(\Omega))\\
 u_t\in L^{\frac{4}{3}}(0,T; (W^{1,4}(\Omega))^*),\;\;\tau v_t\in L^2(0,T;L^2(\Omega))\\
 \gamma(v)\nabla u,\;u\gamma'(v)\nabla v\in L^{\frac{4}{3}}(0,T;L^{\frac{4}{3}}(\Omega))
 \end{cases}
 \end{equation}such that
 \begin{equation}\label{1-7}
 	\langle u_t,\varphi \rangle+\int_\Omega \gamma(v)\nabla u\cdot\nabla\varphi dx+\int_\Omega u\gamma'(v)\nabla v\cdot\nabla \varphi dx=0,\qquad \forall \;\varphi\in W^{1,4}(\Omega), \;\; \text{a.e. in}\;(0,T)
 \end{equation}
 as well as
\begin{equation}
\tau v_{t}=\Delta v-v+u \quad \text{in} \quad  L^{2}((0,T)\times\Omega)\label{1-8}
\end{equation}
and 
\begin{equation}
	u(0,\cdot)=u_0\;\;\text{in}\;\; (W^{1,4}(\Omega))^*,\;\;\qquad \tau v(0,\cdot)=\tau v_0\;\;\text{in}\;\;L^2(\Omega).
\end{equation}
\end{theorem}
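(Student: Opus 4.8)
The plan is to construct a family of approximate problems, derive uniform estimates on them that reflect the regularity claimed in \eqref{regularity}, and then pass to the limit by compactness. The crucial new ingredient that makes the argument work for a generic motility is the comparison method of \cite{GM,GM2}, so the approximation scheme must be chosen to be \emph{compatible} with it: I would regularize by replacing $\gamma$ with a suitable bounded, nondegenerate approximant $\gamma_\eps$ (e.g. truncating from below by $\eps$ while keeping the $C^3$, upper-bound and monotonicity-type structure needed for the auxiliary elliptic problem), and simultaneously regularize the initial data. For each $\eps>0$ one obtains a global classical solution $(u_\eps,v_\eps)$ by the theory already available (local existence plus the comparison bound preventing degeneracy). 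The first and most important step is then to run the comparison argument of \cite{GM,GM2} on the approximate system to get a \emph{uniform-in-$\eps$ pointwise upper bound} $v_\eps \le \overline{v}(T)$ on $(0,T)\times\Omega$; this is where the hypothesis $\tau<1/K_\gamma$ enters, since the auxiliary elliptic comparison problem is solvable and yields a finite supersolution precisely under this smallness condition on $\tau K_\gamma$. This delivers at once $v_\eps\in L^\infty(0,T;L^\infty(\Omega))$ uniformly.

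Next I would extract the remaining a priori bounds. Mass conservation $\io u_\eps = \io u_0$ gives the $L^\infty_tL^1_x$ bound on $u_\eps$. Testing the first equation by $u_\eps$ (or, more robustly, by a primitive of $\gamma_\eps$, i.e. using the "entropy"/Lyapunov-type functional associated to the system) together with the $L^\infty$ bound on $v_\eps$ yields, after absorbing the cross term $\int u_\eps\gamma_\eps'(v_\eps)\nabla v_\eps\cdot\nabla u_\eps$ using Young's inequality and the parabolic regularity for $v_\eps$, a uniform bound for $\sqrt{\gamma_\eps(v_\eps)}\,\nabla u_\eps$ in $L^2_{t,x}$ and for $u_\eps$ in $L^2(0,T;L^2(\Omega))$; feeding $u_\eps\in L^2_{t,x}$ into the second equation and using standard $L^2$ parabolic estimates gives $v_\eps\in L^2(0,T;H^2(\Omega))$, $\tau\partial_t v_\eps\in L^2_{t,x}$, and $\sqrt{\tau}\,v_\eps\in L^\infty(0,T;H^1(\Omega))$. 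To reach the $W^{1,4/3}$ regularity of $u_\eps$ I would write $\nabla u_\eps = \gamma_\eps(v_\eps)^{-1}\big(\nabla(\gamma_\eps(v_\eps)u_\eps) - u_\eps\gamma_\eps'(v_\eps)\nabla v_\eps\big)$ and control the right-hand side: $\gamma_\eps(v_\eps)u_\eps$ is bounded via the flux, while $u_\eps\nabla v_\eps$ is estimated in $L^{4/3}_{t,x}$ by interpolating the $L^\infty_tL^1_x\cap L^2_{t,x}$ control of $u_\eps$ against the $L^2(0,T;H^2)\hookrightarrow L^2(0,T;L^\infty)$-type control of $\nabla v_\eps$ (in low dimensions) or the corresponding Gagliardo--Nirenberg exponents in general $n$; this is exactly the combination that produces the exponent $4/3$. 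From \eqref{1-7} rewritten for $u_\eps$ one then reads off $\partial_t u_\eps\in L^{4/3}(0,T;(W^{1,4}(\Omega))^*)$, and the H\"older-type bound on the fluxes $\gamma_\eps(v_\eps)\nabla u_\eps$ and $u_\eps\gamma_\eps'(v_\eps)\nabla v_\eps$ in $L^{4/3}_{t,x}$ follows from the same estimates together with the uniform bound on $\gamma_\eps,\gamma_\eps'$.

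With these uniform bounds in hand, compactness is routine: by Aubin--Lions, $u_\eps\to u$ strongly in, say, $L^2(0,T;L^2(\Omega))$ (using $u_\eps$ bounded in $L^{4/3}(0,T;W^{1,4/3})$ and $\partial_t u_\eps$ bounded in $L^{4/3}(0,T;(W^{1,4})^*)$), and $v_\eps\to v$ strongly in $L^2(0,T;H^1(\Omega))$ with $v_\eps\rightharpoonup v$ weak-$*$ in $L^\infty_{t,x}$; along a subsequence one also gets a.e.\ convergence, whence $\gamma_\eps(v_\eps)\to\gamma(v)$ and $\gamma_\eps'(v_\eps)\to\gamma'(v)$ a.e.\ and, being uniformly bounded, strongly in every $L^p_{t,x}$, $p<\infty$. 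Then $\gamma_\eps(v_\eps)\nabla u_\eps\rightharpoonup\gamma(v)\nabla u$ and $u_\eps\gamma_\eps'(v_\eps)\nabla v_\eps\rightharpoonup u\gamma'(v)\nabla v$ weakly in $L^{4/3}_{t,x}$ (product of a strongly convergent bounded factor with a weakly convergent one), which lets one pass to the limit in the weak formulation \eqref{1-7} and, more easily, in the linear equation \eqref{1-8}; the initial conditions pass to the limit in the stated weak topologies using the uniform bounds on the time derivatives. Nonnegativity of $u$ and $v$ is preserved in the limit.

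The main obstacle I anticipate is twofold. First, designing the approximation $\gamma_\eps$ so that it is simultaneously (i) nondegenerate (so classical solutions exist for fixed $\eps$), (ii) compatible with the comparison lemma of \cite{GM,GM2} \emph{with $\eps$-independent constants}, and (iii) convergent to $\gamma$ in the right sense; truncating $\gamma$ from below can destroy monotonicity or the $C^3$ bounds that the auxiliary elliptic problem requires, so some care (e.g. a gluing construction) is needed, and one must check that the condition $\tau K_\gamma<1$ survives the regularization. Second, the $L^{4/3}$-type estimates for $\nabla u_\eps$ and for the fluxes are delicate in high dimensions: the only $\eps$-uniform spatial control one has on $u_\eps$ beyond mass is the $L^2_{t,x}$ bound coming from the degenerate energy, and one must verify that interpolating it against $\nabla v_\eps\in L^2(0,T;H^1)$ genuinely closes at the exponent $4/3$ for every $n\ge1$ — this is presumably the technical heart of the paper and the reason the weak solution is claimed only in these Lebesgue/Sobolev classes rather than with higher regularity.
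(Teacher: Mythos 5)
Your outline captures the coarse architecture of the paper (regularize, get a uniform pointwise bound on $v_\varepsilon$ via comparison, derive $\varepsilon$-independent estimates, Aubin--Lions, pass to the limit), but several of the steps you sketch are either not the ones the paper uses or would not close as written.

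First, the comparison argument itself is not run directly on $v_\varepsilon$. The paper introduces the auxiliary function $w_\varepsilon=(I-\Delta)^{-1}u_\varepsilon$ and derives the pointwise identity $w_{\varepsilon t}+(\gamma(v_\varepsilon)+\varepsilon)u_\varepsilon=(I-\Delta)^{-1}[(\gamma(v_\varepsilon)+\varepsilon)u_\varepsilon]$ from the first equation; the elliptic comparison principle then gives $w_{\varepsilon t}\le (K_\gamma+\varepsilon_0)w_\varepsilon$, hence a Gronwall bound on $w_\varepsilon$, and finally $v_\varepsilon$ is dominated by a multiple of $w_\varepsilon$ using the comparison principle for the second equation (heat or elliptic according as $\tau>0$ or $\tau=0$). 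You say ``run the comparison argument of \cite{GM,GM2}'' but never state this mechanism, and you also seem to think some monotonicity or ``auxiliary elliptic problem for $\gamma$'' is involved; under assumption \eqref{1-5} $\gamma$ need not be monotone, and no structural property of $\gamma$ beyond the upper bound $\gamma\le K_\gamma$ enters. Relatedly, your regularization (truncating $\gamma$ from below) is not what the paper does: it merely replaces $\gamma(v_\varepsilon)$ by $\gamma(v_\varepsilon)+\varepsilon$ (degeneracy is then automatically excluded once $v_\varepsilon$ is bounded above) and, crucially, replaces the source $u_\varepsilon$ in the $v$-equation by the cut-off $f_\varepsilon(u_\varepsilon)=u_\varepsilon/(1+\varepsilon u_\varepsilon)$. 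That cut-off is exactly what makes the $\tau=0$ compactness delicate (see below), which your proposal does not address.

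Second, your proposed $L^2_{t,x}$ estimate on $u_\varepsilon$ by ``testing the first equation by $u_\varepsilon$'' does not close: it produces the cross term $\int u_\varepsilon\gamma'(v_\varepsilon)\nabla v_\varepsilon\cdot\nabla u_\varepsilon$, and after Young's inequality you are left with $\int u_\varepsilon^2|\nabla v_\varepsilon|^2$, for which no $\varepsilon$-uniform bound is available at that stage. The paper instead multiplies the first equation by $(-\Delta)^{-1}(u_\varepsilon-\overline{u_{0\varepsilon}})$; because $\Delta(-\Delta)^{-1}(u_\varepsilon-\overline{u_{0\varepsilon}})=-(u_\varepsilon-\overline{u_{0\varepsilon}})$, the cross term never appears and one gets directly $\int(\gamma(v_\varepsilon)+\varepsilon)u_\varepsilon^2\in L^1_t$ together with the $L^\infty_t H^{-1}_x$ bound on $u_\varepsilon$ (which you in any case need for \eqref{regularity}). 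Likewise, your decomposition $\nabla u_\varepsilon=\gamma_\varepsilon^{-1}(\nabla(\gamma_\varepsilon u_\varepsilon)-u_\varepsilon\gamma_\varepsilon'\nabla v_\varepsilon)$ leaves the term $\nabla(\gamma_\varepsilon u_\varepsilon)$ without any a priori control; the paper obtains the $L^{4/3}_{t,x}$ bound on $\nabla u_\varepsilon$ from the $\log u_\varepsilon$ entropy estimate, which yields $\int\gamma(v_\varepsilon)|\nabla u_\varepsilon|^2/u_\varepsilon\in L^1_t$, combined with $u_\varepsilon\in L^2_{t,x}$ and Young's inequality.

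Third, for $\tau=0$ there is no time derivative of $v_\varepsilon$ in the equation, so ``Aubin--Lions gives $v_\varepsilon\to v$ strongly'' needs an argument. The paper proves $w_{\varepsilon t}\in L^2_{t,x}$ from the key identity, applies Aubin--Lions to $w_\varepsilon$, and then shows $\|w_\varepsilon-v_\varepsilon\|_{L^2(0,T;H^1)}\le C(T)\,\varepsilon\to 0$ (this is where the $f_\varepsilon$ cut-off matters, since $w_\varepsilon-v_\varepsilon$ solves the elliptic problem with datum $u_\varepsilon-f_\varepsilon(u_\varepsilon)\le\varepsilon u_\varepsilon^2$). Your proposal omits the auxiliary function entirely and thus has no handle on this case. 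In short: the high-level strategy is right, but the three technical pillars --- the key identity for $w_\varepsilon$, the $(-\Delta)^{-1}$ duality estimate, and the $\log u_\varepsilon$ entropy --- are missing or replaced by steps that do not close.
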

\begin{remark}\label{rem1}
The weak solution obtained above is a weak-strong solution  defined in \cite{Faguo}.
\end{remark}
\begin{remark}\label{rem2}
	Thanks to the upper bounds of $v$ derived by the comparison method, the regularity of our weak solutions is higher than those obtained in the literature \cite{Tao,Faguo,DKTY2019}. 
\end{remark}

Now let us to sketch the idea of our proof. Firstly, we would like to show that  with slight modification the comparison method originally proposed in \cite{GM,GM2} still works for our system under the assumption of \eqref{1-5} and $0\leq \tau<1/{K_\gamma}$. For a non-negative solution $(u,v)$ of system \eqref{1-3}, we introduce the following auxiliary elliptic Helmholtz problem:
\begin{equation}
\begin{cases}
-\Delta w+w=u, & x\in\Omega,\;  t>0 \\
\frac{\partial w}{\partial\nu}=0, &x\in \partial\Omega,\; t>0.\non
\end{cases}
\end{equation}
Then $w$ is well-defined and non-negative. Furthermore, we can derive from the first equation of system \eqref{1-3} the following key identity:
\begin{equation}\label{keyid}
	w_t+u\gamma(v)=(I-\Delta)^{-1}[u\gamma(v)].
\end{equation}
Here $\Delta$ denotes the usual Laplacian operator with homogeneous Neumann boundary condition.  Then with the upper bound assumption on $\gamma$, the non-negativity of $u\gamma(v)$ together with the comparison principle of elliptic equations, we can infer from above that
\begin{equation*}
w_t\leq (I-\Delta)^{-1}[u\gamma(v)]\leq K_\gamma (I-\Delta)^{-1}[u]=K_\gamma w,
\end{equation*}
which will give rise to a point-wise upper bound estimate that $w(x,t)\leq w_0(x)e^{tK_\gamma}$ by invoking Gronwall's inequality. Here $w_0=(I-\Delta)^{-1}[u_0]\geq0.$ Note that when $\tau=0$, $w$ is identical to $v$. Thus,  the upper bound of $v$ follows  if $\tau=0$.

On the other hand, if $\tau\in(0,1/{K_\gamma})$, we notice by the key identity that
\begin{equation*}
	\begin{split}
	\tau v_t-\Delta v+v=u&=\tau w_t-\Delta w+w-\tau w_t\\
	&=\tau w_t-\Delta w+w+\tau u\gamma(v)-\tau (I-\Delta)^{-1}[u\gamma(v)]\\
	&\leq \tau w_t-\Delta w+w+\tau K_\gamma u-\tau (I-\Delta)^{-1}[u\gamma(v)]
	\end{split}
\end{equation*}
Since $(I-\Delta)^{-1}[u\gamma(v)]\geq0$ by comparison principle of elliptic equations and $\tau K_\gamma<1$, we can deduce from above that
\begin{equation*}
	\tau v_t-\Delta v+v=u\leq \frac{1}{1-\tau K_\gamma}\left(\tau w_t-\Delta w+w\right).
\end{equation*}
Therefore, one can apply the comparison principle of heat equations to deduce that
\begin{equation*}
	v(x,t)\leq \frac{1}{1-\tau K_\gamma}\bigg(w(x,t)+K_0\bigg)\leq \frac{1}{1-\tau K_\gamma}\bigg(w_0(x)e^{tK_\gamma}+K_0\bigg)
\end{equation*}
where $K_0>0$ is a constant such that $v_0(x)\leq w_0(x)+K_0$ for all $x\in\overline{\Omega}$. As a result, the comparison method also works for the case $\tau\in(0,1/K_\gamma)$.

 One notices that with the specially chosen motility $\gamma(v)=e^{-v}$,  a solution $(u,v)$ satisfies that
 \begin{equation}\label{Lyapunov1}
 \frac{d}{dt}\mathcal{F}(u,v)(t)+\int_\Omega ue^{-v}\left|\nabla \log u-\nabla v\right|^2dx+\tau \|v_t\|^2_{L^2(\Omega)}=0,
 \end{equation}
 where \begin{equation*}
 \mathcal{F}(u,v)=\int_\Omega \left(u\log u+\frac12|\nabla v|^2+\frac12 v^2-uv\right)dx.
 \end{equation*} Such an energy-dissipation relation plays a key role in deriving adequate estimates to prove the weak solutions in \cite{Faguo}. 
 
  However, the system fails to possess such an entropy  with a generic $\gamma$ satisfying \eqref{1-5}. Thus, above mentioned comparison approach is used here to prove existence of weak solutions. Since the comparison method strongly relies on the structure of the system,  one needs to find a proper approximation procedure that conserves the delicate structure. With a $L^\infty$ cut-off type approximation scheme introduced in \cite{Tao} which is verified compatible with the comparison method, we prove the global existence of weak solutions by compactness argument in any dimension.
  
   Another difficulty in analysis lies the lack of regularity on the approximating solution $v_{\e t}$ when $\tau=0$ and hence the Aubin--Lions lemma cannot be applied directly. A trick used here is to derive  adequate uniform estimates for a family of approximating auxiliary functions $w_{\e}$ that satisfy a similar equation as the key identity \eqref{keyid}. Then one can prove the strong compactness of $w_{\e}$ by Aubin--Lions lemma. Furthermore,  the strong convergence of $v_\e$ follows by proving the difference  $w_\e-v_\e$ vanishes  as $\e\rightarrow0,$ where the uniform upper bounds of $w_\e$ and $v_\e$ derived by comparison method play a crucial role.

 Before concluding this section, we want to stress some new features of this work.
Firstly, we obtain the existence of the global weak solutions in any spatial dimension  with a generic motility satisfying \eqref{1-5}  for any $\tau\in[0,1/K_\gamma)$. Non-decreasing motility and sign-changing of  $\gamma'$ are also permitted in our case.  We remark that the comparison approach modified here can also be used to generalize the corresponding results in \cite{GM,GM2,FJ3}. Secondly, thanks to the upper bound estimates of $v$ given by the comparison method, our weak solution has higher regularity than those obtained in previous literature, see e.g., \cite{Faguo,Tao,DKTY2019}.

%%%%%%%%%%%%%%%%%%%%%%%%%%%%%%%%%%%%%%%%%%%%%%%%%%%%%%%%%%%%%%%%%%%%%%%%%%%%%
\section{Proof of Theorem \ref{1.1} }
In this section, we prove Theorem \ref{1-1} by compactness argument with the help of the modified comparison approach.
\subsection{Local existence of the approximate system}
Let us consider the following regularized problems:
\begin{equation}
\begin{cases}
u_{\varepsilon t}=\Delta ((\gamma(v_{\varepsilon})+\varepsilon)u_{\varepsilon}), & x\in\Omega,\; t>0  \\
\tau v_{\varepsilon t}=\Delta v_{\varepsilon}-v_{\varepsilon}+f_{\varepsilon}(u_{\varepsilon}), & x\in\Omega,\;  t>0 \\
\frac{\partial u_{\varepsilon}}{\partial \nu}=\frac{\partial v_{\varepsilon}}{\partial\nu}=0, &x\in \partial\Omega,\; t>0\\
u_{\varepsilon}(x,0)=u_{0\e}(x),\tau v_{\varepsilon}(x,0)=\tau v_{0\e}(x), &x\in\Omega \label{2-1}
\end{cases}
\end{equation}
where
\begin{equation}
f_{\varepsilon}(u_{\varepsilon})\triangleq\frac{u_{\varepsilon}}{1+\varepsilon u_{\varepsilon}}\quad \! \text{with} \quad \! \varepsilon  \in (0,\e_0)\label{2-2}
\end{equation}
with some fixed 
	$0<\e_0<\min\{1,\frac{1}{\tau}-K_{\gamma}\}.$ Here, $u_{0\e}$ and $v_{0\e}$ are non-negative regular functions such that $\|u_{0\e}\|_{L^\infty(\Omega)}\leq \|u_0\|_{L^\infty(\Omega)}$, $\|v_{0\e}\|_{L^\infty(\Omega)}\leq \|v_0\|_{L^\infty(\Omega)}$ and as $\e\rightarrow0$,  $u_{0\e}\rightarrow u_0$ in $L^2(\Omega)$ and  $v_{0\e}\rightarrow v_0$ in $H^1(\Omega)$.
\begin{lemma}\label{2.1}
	For each $\varepsilon \in (0,\e_0)$, there exists $T_{\mathrm{max},\e}\in(0,\infty]$ such that problem \eqref{2-1} possesses a non-negative classical solution $(u_{\varepsilon},v_{\varepsilon})$ in $\Omega\times(0,T_{\mathrm{max},\e})$.
	\begin{proof}
		For any fixed $\e,$ due to the fact $\varepsilon \leq \gamma(v_{\varepsilon})+\varepsilon\leq K_{\gamma}+\varepsilon$, one can obtain a pair  $(u_{\varepsilon},v_{\varepsilon})$ that solves \eqref{2-1} in the classical sense in $\Omega\times(0,T_{\mathrm{max},\e})$  by \cite[Lemma 2.1]{Tao}.
	\end{proof}
\end{lemma}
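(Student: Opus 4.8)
The plan is to construct the solution, for each fixed $\e\in(0,\e_0)$, by a standard contraction--mapping argument in a parabolic Hölder space, and then to recover non-negativity and the maximal existence time from the maximum principle. The one structural fact used throughout is the uniform ellipticity $\e\le\gamma(s)+\e\le K_\gamma+\e$ for all $s\ge0$: it makes the first equation uniformly parabolic as soon as its coefficient is frozen, so that --- together with $\gamma\in C^3$ --- the problem falls within the classical local theory of uniformly parabolic systems (when $\tau>0$) or of parabolic--elliptic systems (when $\tau=0$).

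Concretely, fix $\alpha\in(0,1)$ and a small $T>0$, and for $\bar v$ in a suitable small closed ball of $C^{2+\alpha,1+\alpha/2}(\overline\Omega\times[0,T])$ define a map $\Phi$ as follows. The coefficient $a:=\gamma(\bar v)+\e$ obeys $\e\le a\le K_\gamma+\e$ and lies in $C^{2+\alpha,1+\alpha/2}$, so $u_t=\Delta(au)$ --- i.e.\ $u_t=a\Delta u+2\nabla a\cdot\nabla u+(\Delta a)u$ --- is a uniformly parabolic linear equation with $C^{\alpha,\alpha/2}$ coefficients, and linear parabolic Schauder theory with the Neumann condition and datum $u_{0\e}$ yields a unique $u\in C^{2+\alpha,1+\alpha/2}$, with norm bounded in terms of $\|\bar v\|_{C^{2+\alpha,1+\alpha/2}}$, $\|u_{0\e}\|_{C^{2+\alpha}}$ and $T$. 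Feeding this $u$ into the second equation, let $\Phi(\bar v):=v$ solve $\tau v_t-\Delta v+v=f_\e(u)$ under the Neumann condition: for $\tau>0$ this is again uniformly parabolic with source $f_\e(u)\in C^{\alpha,\alpha/2}$ and datum $v_{0\e}$, so $v\in C^{2+\alpha,1+\alpha/2}$; for $\tau=0$ it is simply $v(\cdot,t)=(I-\Delta)^{-1}[f_\e(u(\cdot,t))]$, which again lies in $C^{2+\alpha,1+\alpha/2}$ with its time regularity inherited from $u$. Since the Schauder bounds and the corresponding difference estimates (using $\gamma\in C^3$ on the bounded range of $\bar v$ and the smoothness of $f_\e$) degenerate continuously as $T\to0$, for $T=T(\e)$ small the map $\Phi$ sends the ball into itself and is a contraction; Banach's fixed point theorem then produces a unique classical solution $(u_\e,v_\e)$ of \eqref{2-1} on $\Omega\times(0,T)$.

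Non-negativity follows from the maximum principle. Setting $\phi:=(\gamma(v_\e)+\e)u_\e$ and using $u_{\e t}=\Delta\phi$, one computes
\[
\phi_t=(\gamma(v_\e)+\e)\Delta\phi+\frac{\gamma'(v_\e)\,v_{\e t}}{\gamma(v_\e)+\e}\,\phi,
\]
a uniformly parabolic linear equation for $\phi$ with bounded coefficients (the regularity of $v_\e$ just obtained bounds $v_{\e t}$), supplemented by $\partial_\nu\phi=0$ --- which holds since $\partial_\nu u_\e=\partial_\nu v_\e=0$ --- and $\phi(\cdot,0)=(\gamma(v_{0\e})+\e)u_{0\e}\ge0$; hence $\phi\ge0$, so $u_\e\ge0$. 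Then $f_\e(u_\e)\ge0$, and $\tau v_{\e t}-\Delta v_\e+v_\e=f_\e(u_\e)\ge0$ with $v_\e(\cdot,0)\ge0$ under the Neumann condition forces $v_\e\ge0$ by parabolic ($\tau>0$) or elliptic ($\tau=0$) comparison. Continuing the solution for as long as $\|u_\e(\cdot,t)\|_{C^{2+\alpha}}+\|v_\e(\cdot,t)\|_{C^{2+\alpha}}$ stays finite then defines $T_{\mathrm{max},\e}\in(0,\infty]$.

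The main obstacle is purely technical: since the coefficient $a$ enters the first equation through the second-order operator $w\mapsto\Delta(aw)$, the fixed-point space must retain enough \emph{spatial} regularity of $\bar v$ for $\nabla a$, $\Delta a$ --- hence $\Delta(au)$ --- to be meaningful, and one must check that the $v$--solve returns a function with that regularity uniformly and compatibly; this is immediate when $\tau>0$, whereas when $\tau=0$ the time regularity of $v_\e$ is only inherited from that of $f_\e(u_\e)$, already foreshadowing the later difficulty that $v_{\e t}$ is not controlled in the limit $\e\to0$. For classical solvability up to $t=0$ one also needs the compatibility conditions $\partial_\nu u_{0\e}=\partial_\nu v_{0\e}=0$, which is why $u_{0\e},v_{0\e}$ are taken smooth. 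Since every step is entirely classical, one may alternatively just invoke \cite[Lemma 2.1]{Tao} after recording that $\e\le\gamma(v_\e)+\e\le K_\gamma+\e$, as the authors do.
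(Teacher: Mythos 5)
Your proposal is correct and, at bottom, rests on the same structural observation the paper uses --- the uniform parabolicity $\e\le\gamma(\cdot)+\e\le K_\gamma+\e$ guaranteed by \eqref{1-5} --- but it develops a self-contained construction rather than citing \cite[Lemma~2.1]{Tao}, and the technical route differs from what that cited lemma actually does. You run a Schauder-type Banach fixed point in $C^{2+\alpha,1+\alpha/2}(\overline\Omega\times[0,T])$, freezing $\bar v$ to obtain a uniformly parabolic linear problem for $u$, then solving the $v$-equation (parabolic if $\tau>0$, elliptic if $\tau=0$), and closing with a smallness-in-$T$ contraction; non-negativity then comes from writing $\phi=(\gamma(v_\e)+\e)u_\e$ and noting $\phi$ solves a linear parabolic equation with zero-order coefficient $\gamma'(v_\e)v_{\e t}/(\gamma(v_\e)+\e)$ and Neumann data, so the maximum principle applies. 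The reference \cite{Tao}, by contrast, sets up the fixed point using analytic semigroup $L^p$-estimates in a lower-regularity space (essentially $C^0$ in time with values in $W^{1,p}$), which is more flexible with respect to the initial data: your Schauder route silently requires the compatibility conditions $\partial_\nu u_{0\e}=\partial_\nu v_{0\e}=0$ and $C^{2+\alpha}$ initial data, whereas the semigroup route does not, and the paper only assumes $u_{0\e},v_{0\e}$ are ``regular'' approximations. You acknowledge this at the end and also note the alternative of simply citing \cite{Tao}; your Schauder construction also glosses over some bookkeeping in the contraction step (centering the ball, interpolation to get smallness as $T\to0$), but the outline is standard and valid given smooth compatible $u_{0\e},v_{0\e}$. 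Net assessment: a correct, more explicit but slightly more restrictive alternative to the paper's citation-based proof.
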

\subsection{Comparison method and the uniform upper bound of $v_{\varepsilon}$}
In this part, we established the uniform upper bound of $v_{\varepsilon}$ by the comparison method. Firstly, we introduce  auxiliary functions $w_{\varepsilon}(x,t)$ that satisfy the following equations:
\begin{equation}
\begin{cases}
w_{\varepsilon}-\Delta w_{\varepsilon}= u_{\varepsilon},& x\in\Omega ,\;t>0\\
\frac{\partial w_{\varepsilon}}{\partial \nu}=0 ,&  x\in \partial\Omega,\; t>0.\label{2-3}
\end{cases}
\end{equation}
Obviously, $w_\e$ well-defined on $\Omega\times(0,T_{\mathrm{max},\e})$ which is non-negative. Note for the approximation problem, $w_\e$ is no more identical to $v_\e$ if $\tau=0$.  Thus we need firstly establish a  point-wise upper bounds for $w_{\varepsilon}$ as follows.
\begin{lemma}\label{2.2} Assume $0\leq\tau< 1/K_\gamma$.
	For any $(x,t)\in\overline{\Omega}\times[0,T_{\mathrm{max},\e})$, there holds
	\begin{equation}\label{keyide}
		w_{\varepsilon t}+\big(\gamma(v_{\varepsilon})+\e\big)u_{\varepsilon}=(I-\Delta)^{-1}[(\gamma(v_{\varepsilon})+\varepsilon)u_{\varepsilon}].
	\end{equation}
	Moreover, for any $x\in\Omega$ and $t\in(0,T_{\mathrm{max},\e})$, we have
	\begin{equation*}
		w_{\varepsilon}(x,t)\leq w_{0\e}(x)e^{(K_{\gamma}+\e_0)t}
	\end{equation*}
	where $w_{0\e}\triangleq(I-\Delta)^{-1}[u_{0\e}]\in L^\infty(\Omega)$.
\end{lemma}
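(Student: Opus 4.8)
The plan is to establish the key identity \eqref{keyide} first, then use it together with the comparison principle for the Helmholtz operator and a Gronwall argument to obtain the exponential pointwise bound. To derive \eqref{keyide}, I would apply the operator $(I-\Delta)^{-1}$ to the first equation of the approximate system \eqref{2-1}. Since $w_\e = (I-\Delta)^{-1}[u_\e]$ by definition \eqref{2-3}, differentiating in time and commuting $\partial_t$ with the (time-independent) operator $(I-\Delta)^{-1}$ gives $w_{\e t} = (I-\Delta)^{-1}[u_{\e t}] = (I-\Delta)^{-1}[\Delta((\gamma(v_\e)+\e)u_\e)]$. Writing $\Delta = -(I-\Delta) + I$, we get $(I-\Delta)^{-1}\Delta g = -g + (I-\Delta)^{-1}g$ for $g = (\gamma(v_\e)+\e)u_\e$, which is exactly \eqref{keyide}. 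One should note that this formal manipulation is justified because $(u_\e,v_\e)$ is a classical solution on $\Omega\times(0,T_{\mathrm{max},\e})$ by Lemma~\ref{2.1}, so $u_{\e t}$ and hence $w_{\e t}$ are well-defined and smooth, and the no-flux boundary conditions are consistent with the domain of the Neumann operator $(I-\Delta)^{-1}$; the homogeneous Neumann condition on $w_\e$ in \eqref{2-3} matches the one on $\gamma(v_\e)u_\e$ inherited from $u_\e$ and $v_\e$.

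Next, from \eqref{keyide} I would estimate $w_{\e t} = (I-\Delta)^{-1}[(\gamma(v_\e)+\e)u_\e] - (\gamma(v_\e)+\e)u_\e \le (I-\Delta)^{-1}[(\gamma(v_\e)+\e)u_\e]$, using that $(\gamma(v_\e)+\e)u_\e \ge 0$ (since $u_\e,v_\e\ge0$ and $\gamma>0$). Then, since $0 < \gamma(v_\e)+\e \le K_\gamma + \e_0$ pointwise and $(I-\Delta)^{-1}$ is order-preserving on nonnegative data by the maximum principle for the elliptic Helmholtz equation, we obtain $w_{\e t} \le (K_\gamma+\e_0)\,(I-\Delta)^{-1}[u_\e] = (K_\gamma+\e_0)\,w_\e$. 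This is a pointwise differential inequality in $t$ for each fixed $x$; integrating via Gronwall's inequality yields $w_\e(x,t) \le w_\e(x,0)\,e^{(K_\gamma+\e_0)t}$. Finally $w_\e(x,0) = (I-\Delta)^{-1}[u_{0\e}] = w_{0\e}$, and $w_{0\e}\in L^\infty(\Omega)$ follows from elliptic $L^\infty$-regularity for the Helmholtz equation together with $\|u_{0\e}\|_{L^\infty(\Omega)}\le\|u_0\|_{L^\infty(\Omega)}$ (one could also note $0\le w_{0\e}\le\|u_{0\e}\|_{L^\infty}$ directly by comparison with constants). This gives the claimed bound.

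The only genuinely delicate point is justifying the positivity-preserving property of $(I-\Delta)^{-1}$ and, relatedly, that $w_\e \ge 0$: this is the comparison principle for $-\Delta w_\e + w_\e = u_\e \ge 0$ under homogeneous Neumann conditions, which holds because the operator $I-\Delta$ has no kernel (the zeroth-order term is strictly positive) and satisfies the weak maximum principle on a smooth bounded domain; subtracting, if $g_1\le g_2$ then $(I-\Delta)^{-1}g_1\le(I-\Delta)^{-1}g_2$. Everything else is a routine application of Gronwall's inequality, the regularity furnished by Lemma~\ref{2.1}, and the a priori bound $\gamma(\cdot)\le K_\gamma$ from \eqref{1-5}. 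I would present the identity \eqref{keyide} as the main structural observation and then keep the pointwise estimate short, since it is exactly the ODE comparison $\phi' \le c\phi \Rightarrow \phi(t)\le\phi(0)e^{ct}$ applied with $\phi(t)=w_\e(x,t)$ for frozen $x$.
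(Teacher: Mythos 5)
Your proposal is correct and follows essentially the same route as the paper: the paper substitutes $u_\e=(I-\Delta)w_\e$ into the first equation of \eqref{2-1} to get $-\Delta w_{\e t}+w_{\e t}=\Delta\bigl((\gamma(v_\e)+\e)u_\e\bigr)$ and then applies $(I-\Delta)^{-1}$, while you differentiate $w_\e=(I-\Delta)^{-1}[u_\e]$ in time and then use $(I-\Delta)^{-1}\Delta g=(I-\Delta)^{-1}g-g$; these are the same algebraic manipulation. The subsequent steps — dropping the nonnegative term $(\gamma(v_\e)+\e)u_\e$, invoking the order-preserving property of $(I-\Delta)^{-1}$ with the bound $\gamma(v_\e)+\e\le K_\gamma+\e_0$, and applying Gronwall pointwise in $x$ — coincide with the paper's argument, and your added remarks about the Neumann compatibility and elliptic $L^\infty$-regularity of $w_{0\e}$ are correct justifications that the paper leaves implicit.
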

\begin{proof}
	First, a substitution of \eqref{2-3} into the first equation  of \eqref{2-1} yields that
	\begin{equation}
	-\Delta w_{\varepsilon t}+w_{\varepsilon t}=\Delta ((\gamma(v_{\varepsilon})+\varepsilon)u_{\varepsilon})\label{2-4}.
	\end{equation}
	Taking $(I-\Delta)^{-1}$ on both sides of the equality \eqref{2-4}, we obtain the  identity \eqref{keyide}.
	
	Due to fact $\gamma(v_{\varepsilon})+\varepsilon\leq K_{\gamma}+\e_0$, there holds $0\leq(\gamma(v_{\varepsilon})+\varepsilon)u_{\varepsilon}\leq (K_{\gamma}+\e_0)u_{\varepsilon}$ for any $(x,t)\in \overline{\Omega}\times(0,T_{\mathrm{max},\e})$. Then applying the comparison principle for elliptic equations, we deduce  that
	\begin{center}
		$0\leq(I-\Delta)^{-1}[(\gamma(v_{\varepsilon})+\varepsilon)u_{\varepsilon}]\leq (I-\Delta)^{-1}[(K_{\gamma}+\e_0)u_{\varepsilon}]=(K_{\gamma}+\e_0)w_{\varepsilon} $.
	\end{center}
	As a result, for any $(x,t)\in\overline{\Omega}\times[0,T_{\mathrm{max},\e})$, we obtain  by Gronwall's inequality that
	\begin{equation}
		w_{\varepsilon}(x,t)\leq w_{0}(x)e^{t(K_{\gamma}+\e_0)}.\non
	\end{equation} This completes the proof.
\end{proof}
Next, we derive the upper bounds of $v_\e$. First, we consider the case $\tau=0$.
\begin{lemma}\label{2.3}
	Assume $\tau=0$. Then for any $(x,t)\in\overline{\Omega}\times[0,T_{\mathrm{max},\e})$, there holds
	\begin{center}
		$v_{\varepsilon}(x,t)\leq w_{0}(x)e^{(K_{\gamma}+\e_0)t}$.
	\end{center}
	\begin{proof}
	First, we note that
		\begin{center}
			$ v_{\varepsilon}-\Delta v_{\varepsilon}=\frac{u_{\varepsilon}}{1+\varepsilon u_{\varepsilon}}\leq u_{\varepsilon}
			= w_{\varepsilon}-\Delta w_{\varepsilon}$.
		\end{center}
		Then applying the comparison principle for elliptic equations, we obtain by Lemma \ref{2-2} that
		\begin{equation*}
			 v_{\varepsilon}(x,t)\leq w_{\varepsilon}(x,t)\leq w_{0}(x)e^{(K_{\gamma}+\e_0)t}.
		\end{equation*}
	\end{proof}
\end{lemma}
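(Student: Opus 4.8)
The plan is to obtain the bound on $v_\e$ by a pointwise comparison with the auxiliary function $w_\e$ of \eqref{2-3}, and then to quote the exponential-in-time estimate for $w_\e$ already established in Lemma \ref{2.2}. The key structural point is that when $\tau=0$ the second equation of \eqref{2-1} is no longer parabolic: it reads $v_\e-\Delta v_\e=f_\e(u_\e)$ in $\Omega$ with $\partial_\nu v_\e=0$ on $\partial\Omega$, so for each frozen time $t\in(0,T_{\mathrm{max},\e})$ the function $v_\e(\cdot,t)$ solves the very same Helmholtz-type operator $I-\Delta$ (under Neumann data) as $w_\e(\cdot,t)$, only against a smaller source.

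First I would record the elementary inequality $0\le f_\e(u_\e)=u_\e/(1+\e u_\e)\le u_\e$, which holds because $u_\e\ge0$ on $\overline\Omega\times[0,T_{\mathrm{max},\e})$ by Lemma \ref{2.1}. Subtracting the elliptic equation for $v_\e$ from \eqref{2-3} then gives, at each $t$,
\[
(I-\Delta)\big(w_\e-v_\e\big)=u_\e-f_\e(u_\e)\ge 0\quad\text{in }\Omega,\qquad \partial_\nu\big(w_\e-v_\e\big)=0\quad\text{on }\partial\Omega .
\]
Since $(u_\e,v_\e)$, and hence $w_\e$, is a classical solution on $\Omega\times(0,T_{\mathrm{max},\e})$, the weak maximum principle for $I-\Delta$ with homogeneous Neumann boundary condition applies for each such $t$ and yields $v_\e(x,t)\le w_\e(x,t)$ for all $x\in\overline\Omega$. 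Applying the same principle to $v_\e$ itself (whose source $f_\e(u_\e)$ is nonnegative) also shows $v_\e\ge0$, so this is a genuine upper bound on a nonnegative quantity.

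Finally I would insert $w_\e(x,t)\le w_{0\e}(x)e^{(K_\gamma+\e_0)t}$ from Lemma \ref{2.2} to obtain $v_\e(x,t)\le w_{0\e}(x)e^{(K_\gamma+\e_0)t}$ on $\overline\Omega\times(0,T_{\mathrm{max},\e})$ — this is the asserted estimate, the quantity being written $w_0$ in the statement — and extend it up to $t=0$ by continuity of the classical solution. I do not expect a genuine obstacle here: the proof is short, and the only points deserving a word of care are that the classical regularity of $(u_\e,v_\e,w_\e)$ is enough to invoke the frozen-time elliptic comparison principle, and that the nonnegativity of $u_\e$ is used twice — once to get $f_\e(u_\e)\le u_\e$, once to guarantee $w_\e\ge0$. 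The substantive part, the Grönwall argument producing the growth factor $e^{(K_\gamma+\e_0)t}$, has already been done in Lemma \ref{2.2} via the key identity \eqref{keyide}.
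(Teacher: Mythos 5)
Your proof is correct and follows essentially the same route as the paper: compare $v_\e$ and $w_\e$ at each frozen time via the elliptic comparison principle for $I-\Delta$ with Neumann data, using $f_\e(u_\e)\le u_\e$, and then quote the exponential bound on $w_\e$ from Lemma \ref{2.2}. The only (cosmetic) difference is that you spell out the subtraction $(I-\Delta)(w_\e-v_\e)\ge0$ and remark on nonnegativity and the $t=0$ endpoint, which the paper leaves implicit.
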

Then we turn to the case $0<\tau<1/K_\gamma.$
\begin{lemma}\label{lm24}
	Assume $0<\tau<1/{K_{\gamma}}$. Then there is $K_0>0$ independent of $\e$ and time such that for any $(x,t)\in\overline{\Omega}\times[0,T_{\mathrm{max},\e})$
	\begin{equation}
		v_{\varepsilon}(x,t)\leq \frac{w_0(x)e^{(K_\gamma+\e_0)t}+K_0}{1-\tau(K_\gamma+\e_0)}.
	\end{equation}
	\begin{proof}
		Thanks to the second equation of \eqref{2-1} and the key identity \eqref{keyide}, we infer that
		\begin{align*}
		\tau v_{\varepsilon t}-\Delta v_{\varepsilon}+v_{\varepsilon}=&\frac{u_\e}{1+\e u_\e}\non\\
		\leq&u_\e=w_{\varepsilon}-\Delta w_{\varepsilon}\non\\
		=&w_{\varepsilon}-\Delta w_{\varepsilon}+\tau w_{\varepsilon t}-\tau(I-\Delta)^{-1}[(\gamma(v_{\varepsilon})+\varepsilon)u_{\varepsilon}]+\tau(\gamma(v_{\varepsilon})+\varepsilon)u_{\varepsilon}\non\\
		\leq & w_{\varepsilon}-\Delta w_{\varepsilon}+\tau w_{\varepsilon t}+\tau(K_\gamma+\varepsilon_0)u_{\varepsilon}
		\end{align*}due to the non-negativity of $\tau(I-\Delta)^{-1}[(\gamma(v_{\varepsilon})+\varepsilon)u_{\varepsilon}]$ and $u_\e$ together with  the fact $\e< \e_0$ and $\gamma\leq K_\gamma$.
		
Thus we  obtain  for any $(x,t)\in\Omega\times\overline{\Omega}\times[0,T_{\mathrm{max},\e})$ that
\begin{equation*}
	\begin{split}
	\tau v_{\varepsilon t}-\Delta v_{\varepsilon}+v_{\varepsilon}\leq u_\e\leq \frac{1}{1-\tau(K_\gamma+\e_0)}\bigg(\tau w_{\e t}-\Delta w_{\e}+w_\e\bigg).
	\end{split}
\end{equation*}	Now pick $K_0>0$ such that $v_0(x)\leq w_0(x)+K_0$ in $\overline{\Omega}$. Then	
by the comparison principle for heat equations, we deduce that
		\begin{equation*}
			v_{\varepsilon}(x,t)\leq \frac{w_\e(x,t)+K_0}{1-\tau(K_\gamma+\e_0)} 
		\end{equation*}which concludes the proof in view of Lemma \ref{2.2}.
	\end{proof}
\end{lemma}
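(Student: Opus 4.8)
The plan is to establish the upper bound for $v_\e$ in the case $0<\tau<1/K_\gamma$ by combining the key identity \eqref{keyide} with the parabolic comparison principle, exactly paralleling the scalar-$\tau=0$ argument of Lemma \ref{2.3} but with the extra $\tau v_{\e t}$ term handled via the smallness condition $\tau(K_\gamma+\e_0)<1$. First I would start from the second equation of \eqref{2-1}, namely $\tau v_{\e t}-\Delta v_\e+v_\e=f_\e(u_\e)$, and bound the right-hand side by $u_\e$ using $f_\e(u_\e)=u_\e/(1+\e u_\e)\le u_\e$. Then I would rewrite $u_\e$ using the Helmholtz equation \eqref{2-3} as $u_\e=w_\e-\Delta w_\e$, and insert $0=\tau w_{\e t}-\tau w_{\e t}$ together with the key identity \eqref{keyide} in the form $\tau w_{\e t}=\tau(I-\Delta)^{-1}[(\gamma(v_\e)+\e)u_\e]-\tau(\gamma(v_\e)+\e)u_\e$, so that $u_\e=w_\e-\Delta w_\e+\tau w_{\e t}-\tau(I-\Delta)^{-1}[(\gamma(v_\e)+\e)u_\e]+\tau(\gamma(v_\e)+\e)u_\e$.

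Next I would discard the good term: since $u_\e\ge 0$ and $\gamma(v_\e)+\e\ge 0$, the elliptic comparison principle gives $(I-\Delta)^{-1}[(\gamma(v_\e)+\e)u_\e]\ge 0$, so $-\tau(I-\Delta)^{-1}[(\gamma(v_\e)+\e)u_\e]\le 0$; and the remaining term satisfies $\tau(\gamma(v_\e)+\e)u_\e\le \tau(K_\gamma+\e_0)u_\e$ because $\gamma\le K_\gamma$ and $\e<\e_0$. This yields $\tau v_{\e t}-\Delta v_\e+v_\e\le u_\e\le w_\e-\Delta w_\e+\tau w_{\e t}+\tau(K_\gamma+\e_0)u_\e$. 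Moving the last term to the left and using $0\le u_\e\le \tau v_{\e t}-\Delta v_\e+v_\e$ (since $f_\e(u_\e)\ge 0$) — or more directly, solving the scalar inequality $u_\e\le w_\e-\Delta w_\e+\tau w_{\e t}+\tau(K_\gamma+\e_0)u_\e$ for $u_\e$ using $1-\tau(K_\gamma+\e_0)>0$ — I obtain
\begin{equation*}
\tau v_{\e t}-\Delta v_\e+v_\e\le u_\e\le \frac{1}{1-\tau(K_\gamma+\e_0)}\big(\tau w_{\e t}-\Delta w_\e+w_\e\big).
\end{equation*}

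Finally I would fix $K_0>0$ with $v_0(x)\le w_0(x)+K_0$ on $\overline\Omega$ (possible since $v_0\in L^\infty$ and $w_0\ge 0$), note that the constant function $\phi\equiv K_0$ solves $\tau\phi_t-\Delta\phi+\phi=K_0\ge 0$, and observe that $\overline v:=\big(w_\e+K_0\big)/(1-\tau(K_\gamma+\e_0))$ is a supersolution of the parabolic problem satisfied by $v_\e$ with $\overline v(\cdot,0)\ge \big(w_{0\e}+K_0\big)/(1-\tau(K_\gamma+\e_0))\ge w_{0\e}+K_0\ge v_{0\e}$ (using $\|v_{0\e}\|_\infty\le\|v_0\|_\infty$ and the choice of $K_0$). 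The parabolic comparison principle with Neumann data then gives $v_\e\le \big(w_\e+K_0\big)/(1-\tau(K_\gamma+\e_0))$, and invoking the pointwise bound $w_\e(x,t)\le w_{0\e}(x)e^{(K_\gamma+\e_0)t}$ from Lemma \ref{2.2} completes the proof. I expect the only delicate point to be the bookkeeping that turns the differential inequality into a valid supersolution comparison — specifically checking that the initial data and Neumann boundary conditions are correctly ordered so that the comparison principle applies; the algebraic manipulation using $\tau(K_\gamma+\e_0)<1$ is routine once the key identity is in hand.
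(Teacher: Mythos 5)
Your proof is correct and follows exactly the same route as the paper's: bound $f_\e(u_\e)\le u_\e$, rewrite via the Helmholtz relation and insert the key identity \eqref{keyide}, discard the non-negative elliptic inverse term, absorb $\tau(K_\gamma+\e_0)u_\e$ using $\tau(K_\gamma+\e_0)<1$, and invoke the parabolic comparison principle against the supersolution $(w_\e+K_0)/(1-\tau(K_\gamma+\e_0))$. Your additional check that the ordering of initial data holds at the approximating level (via $\|v_{0\e}\|_\infty\le\|v_0\|_\infty$ and $w_{0\e}\ge0$) is a slightly more careful spelling-out of a step the paper passes over implicitly.
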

Since $w_{0\e}\triangleq(I-\Delta)^{-1}[u_{0\e}]$, there holds $\|w_{0\e}\|_{L^\infty(\Omega)}\leq \|u_{0\e}\|_{L^\infty(\Omega)}\leq \|u_0\|_{L^\infty(\Omega)}$. Thus, $w_\e$ and $v_\e$ are both bounded from above by some $\e$-independent constant according to preceding lemmas, i.e., for any $0\leq \tau<1/K_\gamma,$ there is $v^*>0$ independent of $\e$ such that for all $(x,t)\in\overline{\Omega}\times[0,T_{\mathrm{max},\e})$
\begin{equation}\label{ubounds}
	w_\e(x,t),\;v_\e(x,t)\leq v^*.
\end{equation}
Now, we can extend the local classical solution $(u_\e,v_\e)$ globally.
\begin{lemma}
	For each $\varepsilon \in (0,\e_0)$, problem \eqref{2-1} possesses a non-negative classical solution $(u_{\varepsilon},v_{\varepsilon})$ on $\Omega\times(0,\infty)$. Moreover, we have the conservation of mass:
	\begin{equation*}
		\int_\Omega u_\e dx=\int_\Omega u_{0\e}dx\;\;\;\;\text{for all}\;t>0.
	\end{equation*}
\end{lemma}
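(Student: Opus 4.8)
The plan is to argue by contradiction using the standard extensibility criterion attached to the local existence statement in \cite[Lemma 2.1]{Tao}: if $T_{\mathrm{max},\e}<\infty$, then necessarily $\limsup_{t\uparrow T_{\mathrm{max},\e}}\big(\|u_\e(\cdot,t)\|_{L^\infty(\Omega)}+\|v_\e(\cdot,t)\|_{W^{1,\infty}(\Omega)}\big)=\infty$ (or whatever the precise blow-up alternative in that reference reads). So it suffices to establish, on $[0,T_{\mathrm{max},\e})$, an a priori bound for $u_\e$ and $v_\e$ in the relevant norms that depends only on $\e$, the data, and $T_{\mathrm{max},\e}$, never on $t$ itself in a way that could degenerate. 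The key leverage is the uniform-in-time (actually, bounded on finite time intervals) upper bound $v_\e\le v^\ast$ from \eqref{ubounds}, together with non-negativity of $u_\e,v_\e$.

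First I would record that, since $\e\le\gamma(v_\e)+\e\le K_\gamma+\e_0$ is a genuinely uniformly parabolic coefficient with the \emph{upper} bound $v^\ast$ now available, the first equation $u_{\e t}=\Delta((\gamma(v_\e)+\e)u_\e)$ is a non-degenerate, uniformly parabolic equation in divergence form with bounded measurable coefficients. Mass conservation — which I would prove first, by integrating the first equation over $\Omega$ and using the no-flux boundary condition, giving $\frac{d}{dt}\io u_\e\,dx=0$ hence $\io u_\e\,dx=\io u_{0\e}\,dx$ — then gives an $L^1$ bound. Combining the $L^1$ bound with the structure of the equation and $L^p$–$L^q$ parabolic smoothing (or a Moser-type iteration as in the cited Tao–Winkler argument), one bootstraps to $\|u_\e(\cdot,t)\|_{L^\infty(\Omega)}\le C(\e,T_{\mathrm{max},\e})$ on $[0,T_{\mathrm{max},\e})$. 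The boundedness of the coefficient $\gamma(v_\e)+\e$ from both sides, uniformly in time on the finite interval, is exactly what makes this iteration closable here, whereas it fails without the comparison-method upper bound on $v_\e$.

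Next, with $f_\e(u_\e)=u_\e/(1+\e u_\e)\le\min\{1/\e,\,u_\e\}$ now bounded in $L^\infty$, the second equation $\tau v_{\e t}-\Delta v_\e+v_\e=f_\e(u_\e)$ is either an elliptic equation (if $\tau=0$) or a non-degenerate heat equation (if $\tau>0$) with bounded right-hand side; standard elliptic/parabolic Schauder or $L^p$ theory promotes $v_\e$ to a bound in $W^{1,\infty}(\Omega)$ (indeed in $C^{2+\alpha}$ in $x$), again only on the finite interval but that is all that is needed. Feeding this $W^{1,\infty}$ bound on $v_\e$ back, the coefficient $\gamma(v_\e)+\e$ has bounded spatial gradient, so $u_\e$ enjoys Hölder and then classical Schauder estimates, closing the bootstrap for classical regularity. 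Thus both $\|u_\e(\cdot,t)\|_{L^\infty}$ and $\|v_\e(\cdot,t)\|_{W^{1,\infty}}$ stay finite as $t\uparrow T_{\mathrm{max},\e}$, contradicting $T_{\mathrm{max},\e}<\infty$; hence $T_{\mathrm{max},\e}=\infty$, and mass conservation holds for all $t>0$.

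The main obstacle — and the only place where real work is needed rather than citing standard theory — is the passage from the $L^1$ (mass) bound to an $L^\infty$ bound for $u_\e$ that does not blow up in finite time; in a degenerate problem this step is genuinely delicate, but here the upper bound $v_\e\le v^\ast$ from \eqref{ubounds} restores uniform parabolicity, so I expect one can either invoke the semigroup/Moser estimates of \cite{Tao} essentially verbatim or run a short Alikakos–Moser iteration directly. Everything after that is routine bootstrapping and the blow-up dichotomy.
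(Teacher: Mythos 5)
Your proposal matches the paper's argument: the paper also uses the comparison-method upper bound $v_\e\le v^\ast$ from \eqref{ubounds} to conclude $0<k_\gamma\le\gamma(v_\e)\le K_\gamma$ and $|\gamma'(v_\e)|\le K_{\gamma'}$, then simply invokes the argument of \cite[Lemma 5.1]{Tao} (the Moser-type iteration you sketch) to extend the local solution globally, and proves mass conservation by integrating the first equation over $\Omega$. Your write-up is more explicit about the blow-up alternative and the bootstrap, but the route is the same.
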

\begin{proof}
Since  $0\leq v_\e(x,t)\leq v^*$  for all $(x,t)\in\overline{\Omega}\times[0,T_{\mathrm{max},\e})$, due to our assumption \eqref{1-5} on $\gamma$, there is a $k_{\gamma}, K_{\gamma'}>0$ which are independent of $\varepsilon$ such that $0<k_{\gamma}\leq\gamma(v_{\varepsilon})\leq K_{\gamma}$ as well as $|\gamma'|\leq K_{\gamma'}$ on $\overline{\Omega}\times[0,T_{\mathrm{max},\e})$.  Then we can argue in the same manner as in \cite[Lemma 5.1]{Tao} to prove that the classical solution obtained above indeed is a global one. The conservation of mass follows from a direct integration of the first equation in \eqref{2-1} over $\Omega$.
\end{proof}

\subsection{Uniform estimates}
In this part, we derive certain estimates for the global classical approximation solution $(u_\e,v_\e)$ that are independent of  $\varepsilon$.
\begin{lemma}\label{2.4}
Assume that $\tau\geq0$	and $(u_{\varepsilon},v_{\varepsilon})$ is a classical solution of system \eqref{2.3} on $\Omega \times (0,\infty)$. There exist $C>0$ depending on the $ u_{0},K_\gamma$ and $\Omega$ such that for all $\varepsilon \in  (0,\e_0), T>0$,
	\begin{equation*}
		\sup\limits_{0<t<T}\|u_{\varepsilon}(t)-\overline{u_{0\e}}\|^{2}_{H^{-1}(\Omega)}+\| w_{\varepsilon}(t)\|^{2}_{H^{1}(\Omega)}+\int^{T}_{0} \int_{\Omega}\gamma(v_{\varepsilon})u_{\varepsilon}^{2}dxdt\leq 2\| u_{0}-\overline{u_{0\e}}\|^{2}_{H^{-1}(\Omega)}+2\overline{u_{0\e}}^{2}|\Omega|+CT,
	\end{equation*}
	where $\overline{u_{0\e}}=\frac{1}{|\Omega|}\int_{\Omega}u_{0\e} dx$. In particular, there is $C(T)>0$ depending only on the initial data, $T,K_\gamma$ and $\Omega$ such that
	\begin{equation}
		\int_0^T\int_\Omega u^2_{\e }dxdt\leq C(T).
	\end{equation}
\end{lemma}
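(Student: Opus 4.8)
The plan is to test the first equation of the approximate system \eqref{2-1} against a suitable functional of $u_\e$ that produces the $H^{-1}$-norm, and to use the key identity \eqref{keyide} (equivalently the Helmholtz equation \eqref{2-3}) to rewrite everything in terms of $w_\e$. Concretely, since $\int_\Omega u_\e\,dx=\int_\Omega u_{0\e}\,dx$ is conserved, the function $u_\e-\overline{u_{0\e}}$ has zero mean, so $(I-\Delta)^{-1}$ applied to it (or rather the Neumann-Laplacian-based $H^{-1}$ pairing) is well-behaved; in fact one expects the cleaner route to be to differentiate $\|w_\e\|_{H^1(\Omega)}^2$ in time. Using \eqref{2-4}, namely $-\Delta w_{\e t}+w_{\e t}=\Delta\big((\gamma(v_\e)+\e)u_\e\big)$, multiply by $w_\e$ and integrate by parts: the left side gives $\frac12\frac{d}{dt}\big(\|\nabla w_\e\|_{L^2}^2+\|w_\e\|_{L^2}^2\big)$, and the right side, after integrating by parts twice and using $w_\e-\Delta w_\e=u_\e$, produces $-\int_\Omega (\gamma(v_\e)+\e)u_\e\,\Delta w_\e\,dx = -\int_\Omega(\gamma(v_\e)+\e)u_\e(w_\e-u_\e)\,dx = \int_\Omega(\gamma(v_\e)+\e)u_\e^2\,dx - \int_\Omega(\gamma(v_\e)+\e)u_\e w_\e\,dx$. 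Hence

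First I would arrive at an identity of the schematic form
\begin{equation*}
\frac12\frac{d}{dt}\|w_\e\|_{H^1(\Omega)}^2+\int_\Omega(\gamma(v_\e)+\e)u_\e^2\,dx=\int_\Omega(\gamma(v_\e)+\e)u_\e w_\e\,dx.
\end{equation*}
The right-hand side is the term to control. Since $0<\gamma(v_\e)+\e\le K_\gamma+\e_0$ and, crucially, $w_\e$ is bounded above uniformly in $\e$ by $v^*$ from \eqref{ubounds} (and below by $0$), I would estimate $\int_\Omega(\gamma(v_\e)+\e)u_\e w_\e\,dx\le (K_\gamma+\e_0)\,v^*\int_\Omega u_\e\,dx=(K_\gamma+\e_0)v^*\|u_{0\e}\|_{L^1(\Omega)}$, which is a constant depending only on the data. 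Alternatively one splits with Young's inequality to absorb half of $\int\gamma(v_\e)u_\e^2$, but the $L^1$-conservation route is cleaner and avoids needing $\e$ in the absorbed term. Integrating in time on $(0,t)$ and using $w_\e(0)=w_{0\e}$ with $\|w_{0\e}\|_{H^1(\Omega)}\le C\|u_{0\e}\|_{L^2(\Omega)}\le C\|u_0\|_{L^2(\Omega)}$ (elliptic regularity for \eqref{2-3}), this yields
\begin{equation*}
\sup_{0<t<T}\|w_\e(t)\|_{H^1(\Omega)}^2+\int_0^T\!\!\int_\Omega \gamma(v_\e)u_\e^2\,dx\,dt\le \|w_{0\e}\|_{H^1(\Omega)}^2+CT,
\end{equation*}
which is essentially the asserted bound once I identify $\|u_\e(t)-\overline{u_{0\e}}\|_{H^{-1}(\Omega)}^2$ with (the relevant part of) $\|w_\e(t)\|_{H^1(\Omega)}^2$: indeed $w_\e-\overline{u_{0\e}}=(I-\Delta)^{-1}(u_\e-\overline{u_{0\e}})$ solves a Helmholtz problem with zero-mean data, and the $H^1$ norm of the solution is comparable to the $H^{-1}$ norm of the data, giving the factor-$2$ form stated (after also accounting for the constant mode $\overline{u_{0\e}}$, which contributes the $2\overline{u_{0\e}}^2|\Omega|$ term and is handled by splitting $w_\e=(w_\e-\overline{u_{0\e}})+\overline{u_{0\e}}$). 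The final $L^2$-in-spacetime bound on $u_\e$ then follows because on $\overline\Omega\times[0,T]$ we have $\gamma(v_\e)\ge k_\gamma>0$ (as $v_\e\le v^*$ and $\gamma$ is continuous and positive), so $\int_0^T\!\!\int_\Omega u_\e^2\le k_\gamma^{-1}\int_0^T\!\!\int_\Omega\gamma(v_\e)u_\e^2\le C(T)$.

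The main obstacle I anticipate is not analytic depth but bookkeeping: making the $H^{-1}$–$H^1$ duality precise on a bounded domain with Neumann conditions (where $I-\Delta$ is invertible, unlike $-\Delta$ alone, so the Helmholtz operator is exactly the right tool), correctly separating the zero-mean part from the conserved mean $\overline{u_{0\e}}$, and checking that all constants ($C$, $C(T)$, $k_\gamma$) depend only on $u_0$, $K_\gamma$, $\Omega$ (and $T$) and not on $\e$ — the latter hinging entirely on the uniform upper bound $v^*$ from \eqref{ubounds}, which is where the comparison method pays off. A secondary point is justifying the differentiation in time and the integrations by parts, which is legitimate because $(u_\e,v_\e)$ is a classical global solution and $w_\e$ inherits parabolic-in-time, elliptic-in-space regularity from \eqref{2-3}.
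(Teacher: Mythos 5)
Your argument is correct in substance and close in spirit to the paper's, but the test function you choose is slightly different and that costs you the $T$-independence of the constant $C$. The paper multiplies the first equation $u_{\e t}=\Delta\big((\gamma(v_\e)+\e)u_\e\big)$ by $(-\Delta)^{-1}(u_\e-\overline{u_{0\e}})$; since $\Delta\big[(-\Delta)^{-1}(u_\e-\overline{u_{0\e}})\big]=-(u_\e-\overline{u_{0\e}})$, the cross term on the right is $\overline{u_{0\e}}\int_\Omega(\gamma(v_\e)+\e)u_\e\,dx\le (K_\gamma+1)\,\overline{u_{0\e}}^2|\Omega|$, a \emph{pure constant} coming from mass conservation and $\gamma\le K_\gamma$ alone, with no reference to the comparison bound $v^*$. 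It then derives $\|w_\e\|_{H^1}^2\le\|u_\e-\overline{u_{0\e}}\|_{H^{-1}}^2+\overline{u_{0\e}}^2|\Omega|$ as a separate, purely elliptic estimate from \eqref{2-3}. You instead test with $w_\e=(I-\Delta)^{-1}u_\e$ (equivalently, multiply \eqref{2-4} by $w_\e$); here $\Delta w_\e=w_\e-u_\e$, so the cross term becomes $\int_\Omega(\gamma(v_\e)+\e)u_\e w_\e\,dx$, which contains an extra factor $w_\e$ and cannot be reduced to a constant by mass conservation alone — you are forced to invoke the pointwise bound $w_\e\le v^*$. Since $v^*$ grows like $e^{(K_\gamma+\e_0)T}$ by Lemma \ref{2.2}, your intermediate estimate has a $T$-dependent $C$, whereas the lemma asserts $C$ depends only on $u_0,K_\gamma,\Omega$ with the $T$-growth appearing solely as the explicit $+CT$. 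This is harmless for everything downstream in this paper — the subsequent lemmas only use $\int_0^T\!\!\int_\Omega u_\e^2\le C(T)$ — but it does prove a strictly weaker form of the stated inequality. If you want to match the paper, keep your identity but estimate the cross term by first subtracting the mean, or simply switch the multiplier to $(-\Delta)^{-1}(u_\e-\overline{u_{0\e}})$. Finally, your last step (lower bound $\gamma(v_\e)\ge k_\gamma>0$ from $0\le v_\e\le v^*$, continuity, and positivity of $\gamma$) is exactly the paper's, and your recovery of the $H^{-1}$ bound from the $H^1$ bound on $w_\e$ via the Helmholtz isomorphism on mean-zero functions is fine, though worth writing out since the $(-\Delta)^{-1/2}$ and $(I-\Delta)^{-1/2}$ norms agree only up to equivalence constants.
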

\begin{proof}
	By conservation of mass, one has $\overline{u_{\varepsilon}}=\overline{w_\e}=\overline{u_{0\e}}$. Multiplying the first equation by $(-\Delta)^{-1}(u_{\varepsilon}-\overline{u_{0\e}})$ and integrating over $\Omega$, we obtain that
	\begin{equation*}
		\frac{1}{2}\frac{d}{dt}\|(-\Delta)^{-\frac{1}{2}}(u_{\varepsilon}-\overline{u_{0\e}})\|^{2}_{L^{2}(\Omega)}+\int_{\Omega}(\gamma(v_{\varepsilon})+\varepsilon)u_{\varepsilon}^{2}dx=\overline{u_{0\e}}\int_{\Omega}(\gamma(v_{\varepsilon})+\varepsilon)u_{\varepsilon} dx.
	\end{equation*}
	Thanks to the fact that $\gamma(v_{\varepsilon})\leq K_{\gamma}$, we infer that
	\begin{equation*}
		\frac{1}{2}\frac{d}{dt}\|(-\Delta)^{-\frac{1}{2}}(u_{\varepsilon}-\overline{u_{0\e}})\|^{2}_{L^{2}(\Omega)}+\int_{\Omega}(\gamma(v_{\varepsilon})+\varepsilon)u_{\varepsilon}^{2}dx
		\leq (K_{\gamma}+1)\overline{u_{0\e}}^{2}|\Omega|,
	\end{equation*}
	which by a direct integration on $(0,T)$ with any $T\in(0,\infty)$ implies that
	\begin{equation*}
		\begin{split}
		&\sup\limits_{0\leq t\leq T}\|(-\Delta)^{-\frac{1}{2}}(u_{\varepsilon}(t)-\overline{u_{0\e}})\|^{2}_{L^{2}(\Omega)}+ 2 \int^{T}_{0}\int_{\Omega}(\gamma(v_{\varepsilon})+\varepsilon)u_{\varepsilon}^{2}dxdt \\
	\leq& \|(-\Delta)^{-\frac{1}{2}}(u_{0}-\overline{u_{0\e}})\|^{2}_{L^{2}(\Omega)}+2 (K_{\gamma}+1)\overline{u_{0\e}}^{2}|\Omega| T.
	\end{split}
	\end{equation*} Since $\overline{u_{0\e}}\leq\|u_0\|_{L^\infty(\Omega)}$ and $\gamma(v_\e)$ is bounded from above and below, there is $C(T)>0$ independent of $\e$ such that
	\begin{equation*}
		\int_0^T\int_\Omega u_{\e}^2dxdt\leq C_T.
	\end{equation*}
	On the other hand, we observe from  \eqref{2-3} that
	\begin{align*}
	\|w_{\varepsilon}\|_{H^1(\Omega)}^2=&\int_\Omega (|\nabla w_{\varepsilon}|^2+w_{\varepsilon}^2)dx\non\\
	=&\int_\Omega u_{\varepsilon}w_{\varepsilon}dx\non\\
	=&\int_\Omega (u_{\varepsilon}-\overline{u_{0\e}})w_{\varepsilon}dx+\overline{u_{0\e}}^2|\Omega|\non\\
	\leq& \|u_{\varepsilon}-\overline{u_{0\e}}\|_{H^{-1}(\Omega)}\|w_{\varepsilon}\|_{H^1(\Omega)}+\overline{u_{0\e}}^2|\Omega|.
	\end{align*}
	By Young's inequality,  we obtain that
	\begin{equation*}
		\| w_{\varepsilon} \|^{2}_{H^{1}(\Omega)} \leq\| u_{\varepsilon}-\overline{u_{0\e}}\|^{2}_{H^{-1}(\Omega)}+\overline{u_{0\e}}^{2}|\Omega|
	\end{equation*}which concludes the proof.
\end{proof}
\begin{lemma}\label{2.8}
	Assume $0\leq\tau<\frac{1}{K_{\gamma}}$. For any $T>0$, we can find $C(T)>0$ such that
	\begin{equation}\label{vh1}
	\tau\sup\limits_{0<t<T}\|v_\e(t)\|^2_{H^1(\Omega)}+\int_{0}^{T}\| v_{\varepsilon}\|_{H^2(\Omega)}^{2}\leq C(T)\quad for \quad  all \quad  \varepsilon \in (0,\e_0).
	\end{equation}\end{lemma}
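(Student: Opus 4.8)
The plan is to test the second equation of the approximating system \eqref{2-1} against $-\Delta v_\e$, combine the resulting differential inequality with the space-time $L^2$-bound on $u_\e$ furnished by Lemma \ref{2.4} and with the uniform $L^\infty$-bound \eqref{ubounds} on $v_\e$, and finally upgrade to the full $H^2$-norm by elliptic regularity. Throughout we use that $f_\e(u_\e)=u_\e/(1+\e u_\e)$ satisfies $0\le f_\e(u_\e)\le u_\e$ pointwise, hence $\|f_\e(u_\e)\|_{L^2(\Omega)}\le\|u_\e\|_{L^2(\Omega)}$, and that by Lemma \ref{2.4} there is $C(T)>0$, independent of $\e$, with $\int_0^T\|u_\e(t)\|_{L^2(\Omega)}^2\,dt\le C(T)$.

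The first step is the energy estimate. Multiplying $\tau v_{\e t}=\Delta v_\e-v_\e+f_\e(u_\e)$ by $-\Delta v_\e$ and integrating over $\Omega$, using that $(u_\e,v_\e)$ is a classical solution with $\partial_\nu v_\e=0$ on $\pO$ (so that $\int_\Omega v_{\e t}(-\Delta v_\e)\,dx=\int_\Omega\nabla v_{\e t}\cdot\nabla v_\e\,dx=\tfrac12\tfrac{d}{dt}\|\nabla v_\e\|_{L^2(\Omega)}^2$ and $\int_\Omega v_\e\Delta v_\e\,dx=-\|\nabla v_\e\|_{L^2(\Omega)}^2$), we obtain
\begin{equation*}
\frac{\tau}{2}\frac{d}{dt}\|\nabla v_\e\|_{L^2(\Omega)}^2+\|\Delta v_\e\|_{L^2(\Omega)}^2+\|\nabla v_\e\|_{L^2(\Omega)}^2=\int_\Omega f_\e(u_\e)\,(-\Delta v_\e)\,dx\le\frac12\|\Delta v_\e\|_{L^2(\Omega)}^2+\frac12\|u_\e\|_{L^2(\Omega)}^2 .
\end{equation*}
Absorbing $\tfrac12\|\Delta v_\e\|_{L^2(\Omega)}^2$ on the left and integrating over $(0,T)$ gives
\begin{equation*}
\tau\,\|\nabla v_\e(T)\|_{L^2(\Omega)}^2+\int_0^T\|\Delta v_\e(t)\|_{L^2(\Omega)}^2\,dt\le \tau\,\|\nabla v_{0\e}\|_{L^2(\Omega)}^2+\int_0^T\|u_\e(t)\|_{L^2(\Omega)}^2\,dt\le C(T),
\end{equation*}
where we used $\|\nabla v_{0\e}\|_{L^2(\Omega)}\le C$, which follows from the convergence $v_{0\e}\to v_0$ in $H^1(\Omega)$; since $T>0$ is arbitrary this in fact bounds $\tau\sup_{0<t<T}\|\nabla v_\e(t)\|_{L^2(\Omega)}^2$ as well.

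The second step upgrades this to the $H^2$-norm. For $\varphi\in H^2(\Omega)$ with $\partial_\nu\varphi=0$ on the smooth bounded $\pO$ one has the elliptic estimate $\|\varphi\|_{H^2(\Omega)}\le C\big(\|\Delta\varphi\|_{L^2(\Omega)}+\|\varphi\|_{L^2(\Omega)}\big)$; applying it to $v_\e(t)$ and invoking \eqref{ubounds}, which gives $\|v_\e(t)\|_{L^2(\Omega)}^2\le (v^*)^2|\Omega|$ uniformly in $\e$ and $t$, we conclude
\begin{equation*}
\int_0^T\|v_\e(t)\|_{H^2(\Omega)}^2\,dt\le C\int_0^T\big(\|\Delta v_\e(t)\|_{L^2(\Omega)}^2+(v^*)^2|\Omega|\big)\,dt\le C(T),
\end{equation*}
and likewise $\tau\sup_{0<t<T}\|v_\e(t)\|_{H^1(\Omega)}^2\le \tau(v^*)^2|\Omega|+\tau\sup_{0<t<T}\|\nabla v_\e(t)\|_{L^2(\Omega)}^2\le C(T)$, which is \eqref{vh1}. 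I do not expect a real obstacle: the only points needing care are the justification of the integration by parts (ensured by the classical regularity of $(u_\e,v_\e)$ and the Neumann condition) and the fact that every constant stays $\e$-independent — which holds because the right-hand sides depend only on $\int_0^T\|u_\e\|_{L^2(\Omega)}^2\le C(T)$, on $v^*$, on $|\Omega|$, and on the $\e$-uniform bound for $\|v_{0\e}\|_{H^1(\Omega)}$. The argument is uniform in $\tau\in[0,1/K_\gamma)$; in particular for $\tau=0$ one may alternatively read \eqref{vh1} directly off the elliptic identity $v_\e-\Delta v_\e=f_\e(u_\e)$ via elliptic $H^2$-regularity.
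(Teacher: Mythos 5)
Your proof is correct and follows essentially the same route as the paper: test the second equation against $-\Delta v_\e$, use $|f_\e(u_\e)|\le u_\e$ together with Young's inequality and the space-time $L^2$-bound on $u_\e$ from Lemma \ref{2.4}, then integrate in time; the paper is merely terser, leaving the final elliptic $H^2$-upgrade and the degenerate $\tau=0$ case implicit, both of which you spell out correctly.
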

\begin{proof} Note that $|f_\e(u_\e)|\leq u_\e$ a.e.
	When $0<\tau<\frac{1}{K_{\gamma}}$, we test the second equation in \eqref{2-1} by $-\Delta v_{\varepsilon}$ to obtain
	\begin{align*}
	\frac{\tau}{2}\frac{d}{dt}\int_{\Omega}|\nabla v_{\varepsilon}|^{2}+\int_{\Omega}|\Delta v_{\varepsilon}|^{2}+\int_{\Omega}|\nabla v_{\varepsilon}|^{2}=&-\int_{\Omega}f(u_{\varepsilon})\Delta v_{\varepsilon}\non\\
	\leq&\frac{1}{2}\int_{\Omega}|f(u_{\varepsilon})|^{2}+\frac{1}{2}\int_{\Omega}|\Delta v_{\varepsilon}|^{2}\non\\
	\leq&\frac{1}{2}\int_{\Omega}u_{\varepsilon}^{2}dx+\frac{1}{2}\int_{\Omega}|\Delta v_{\varepsilon}|^{2}.
	\end{align*}Then \eqref{vh1} follows from an integration with respect to time and  Lemma \ref{2.4}.
	If $\tau=0,$ the assertion follows directly from the second equation of \eqref{2-1} and Lemma \ref{2.4}.
\end{proof}

\begin{lemma}\label{2.5}
	Assume that $0\leq \tau <1/{K_{\gamma}}$ and $(u_{\varepsilon},v_{\varepsilon})$ is a classical solution of system \eqref{2-1} on $\Omega\times(0,\infty)$. For each $T>0$, we can find $C(T)>0$ depending on the initial data and $\Omega$ such that
	\begin{equation}
	\sup\limits_{0<t<T}\int_{\Omega} u_{\varepsilon}(t)\log u_{\varepsilon}(t) dx+\int_{0}^{T}\int_{\Omega}\gamma(v_{\varepsilon})\frac{| \nabla u_{\varepsilon}|^{2}}{u_{\varepsilon}}dxdt \leq C(T) \quad \text{for   all} \quad  \varepsilon \in  (0,\e_0)\non.
	\end{equation}
	\begin{proof}
		Multiplying the first equation of \eqref{2-1} by $\log u_{\varepsilon}$ and integrating over $\Omega$, we obtain that
		\begin{align}
		\frac{d}{dt}\int_\Omega u_{\varepsilon}\log u_{\varepsilon}dx+\int_\Omega (\gamma(v_{\varepsilon})+\varepsilon)\frac{|\nabla u_{\varepsilon}|^2}{u_{\varepsilon}}dx=-\int_\Omega \gamma'(v_{\varepsilon})\nabla v_{\varepsilon}\cdot \nabla u_{\varepsilon}dx\label{estlog}
		\end{align}
		where 
		\begin{align*}
		\left|\int_\Omega \gamma'(v_{\varepsilon})\nabla v_{\varepsilon}\cdot \nabla u_{\varepsilon}dx\right|
		\leq&\frac12\int_\Omega \gamma(v_{\varepsilon})\frac{|\nabla u_{\varepsilon}|^2}{u_{\varepsilon}}dx+\frac{1}{2}\int_\Omega
		\frac{|\gamma'(v_{\varepsilon})|^2}{\gamma(v_{\varepsilon})}u_{\varepsilon}|\nabla v_{\varepsilon}|^2dx\non\\
		\leq &\frac12\int_\Omega \gamma(v_{\varepsilon})\frac{|\nabla u_{\varepsilon}|^2}{u_{\varepsilon}}dx+\frac{1}{4}\int_\Omega \gamma(v_{\varepsilon})u_{\varepsilon}^2dx+
		\frac{1}{4}\int_\Omega\frac{|\gamma'(v_{\varepsilon})|^4}{\gamma(v_{\varepsilon})^3}|\nabla v_{\varepsilon}|^4dx.\non
		\end{align*}
		Invoking the Gagliardo--Nirenberg inequality, we obtain that
		\begin{equation}\nn
			\|\nabla v_{\varepsilon}\| _{L^{4}(\Omega)} \leq C\|v_{\varepsilon}\| _{H^{2}(\Omega)}^{\frac{1}{2}}\| v_{\varepsilon} \|_{L^{\infty}(\Omega)}^{\frac{1}{2}}+C\| v_{\varepsilon}\|_{L^{\infty}(\Omega)}.
		\end{equation}
		In view of Lemma \ref{2.8}, our assumption on $\gamma$ and the upper bound of $v$ given by Lemma \ref{2.3} \& Lemma \ref{lm24}, there is $C(T)>0$ depending on  $\gamma$ and the initial data such that
		\begin{align}
		\int_0^T\int_\Omega\frac{|\gamma'(v_{\varepsilon})|^4}{\gamma(v_{\varepsilon})^3}|\nabla v_{\varepsilon}|^4dx\leq& C(T)\int_0^T\int_\Omega|\nabla v_{\varepsilon}|^4dx\non\\
		\leq&C(T)\int_0^T\|v_{\varepsilon}\|^2_{H^2(\Omega)}+C(T)\leq C(T).\label{nv}
		\end{align}	
Thus we can conclude the proof by integrating \eqref{estlog} with respect to time.	
	\end{proof}
\end{lemma}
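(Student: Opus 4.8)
The plan is to derive an entropy-type identity by testing the first equation of \eqref{2-1} against $\log u_\e$, and then to control the single dangerous term---a gradient cross term---using the uniform-in-$\e$ estimates already obtained, most crucially the pointwise bound \eqref{ubounds} on $v_\e$ coming from the comparison method.

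First I would multiply $u_{\e t}=\Delta\big((\gamma(v_\e)+\e)u_\e\big)$ by $\log u_\e$ and integrate over $\Omega$; this is legitimate because $u_\e>0$ on $\overline\Omega\times(0,T_{\mathrm{max},\e})$ by Lemma~\ref{2.1}. Since $\int_\Omega u_{\e t}\,dx=0$ by conservation of mass, the time term becomes $\frac{d}{dt}\int_\Omega u_\e\log u_\e\,dx$, and after an integration by parts together with the expansion $\nabla\big((\gamma(v_\e)+\e)u_\e\big)=(\gamma(v_\e)+\e)\nabla u_\e+u_\e\gamma'(v_\e)\nabla v_\e$ one arrives at
\[
\frac{d}{dt}\int_\Omega u_\e\log u_\e\,dx+\int_\Omega(\gamma(v_\e)+\e)\frac{|\nabla u_\e|^2}{u_\e}\,dx=-\int_\Omega\gamma'(v_\e)\nabla v_\e\cdot\nabla u_\e\,dx .
\]
For the right-hand side I would split the integrand as $\sqrt{\gamma(v_\e)}\,u_\e^{-1/2}|\nabla u_\e|$ times $\gamma(v_\e)^{-1/2}|\gamma'(v_\e)|\,u_\e^{1/2}|\nabla v_\e|$ and apply Young's inequality twice: the first application peels off $\tfrac12\int_\Omega\gamma(v_\e)|\nabla u_\e|^2/u_\e$, which is absorbed on the left, and the second splits what remains into a contribution bounded by $\tfrac14\int_\Omega\gamma(v_\e)u_\e^2\,dx+\tfrac14\int_\Omega\frac{|\gamma'(v_\e)|^4}{\gamma(v_\e)^3}|\nabla v_\e|^4\,dx$.

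It then remains to bound the time integrals of these two quantities. The first is immediately controlled by Lemma~\ref{2.4}. For the second, the key is the $\e$-independent pointwise bound $0\le v_\e\le v^*$ from \eqref{ubounds}, itself a consequence of the comparison method (Lemmas~\ref{2.3} and~\ref{lm24}): since $\gamma\in C^3[0,\infty)$ is strictly positive, on the compact interval $[0,v^*]$ we have $\gamma\ge k_\gamma>0$ and $|\gamma'|\le K_{\gamma'}$, so the coefficient $|\gamma'(v_\e)|^4/\gamma(v_\e)^3$ is bounded by a constant depending only on $\gamma$ and $v^*$. One is then left with $\int_0^T\!\int_\Omega|\nabla v_\e|^4\,dx\,dt$, which I would control by the Gagliardo--Nirenberg inequality $\|\nabla v_\e\|_{L^4(\Omega)}\le C\|v_\e\|_{H^2(\Omega)}^{1/2}\|v_\e\|_{L^\infty(\Omega)}^{1/2}+C\|v_\e\|_{L^\infty(\Omega)}$; raising to the fourth power and using \eqref{ubounds} once more gives $\|\nabla v_\e\|_{L^4(\Omega)}^4\le C(T)\|v_\e\|_{H^2(\Omega)}^2+C(T)$, and the $L^2(0,T;H^2(\Omega))$ bound of Lemma~\ref{2.8} makes the time integral finite. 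Finally, integrating the displayed identity over $(0,t)$, absorbing the half-dissipation, and noting that $\int_\Omega u_{0\e}\log u_{0\e}\,dx$ is bounded uniformly in $\e$ (since $\|u_{0\e}\|_{L^\infty(\Omega)}\le\|u_0\|_{L^\infty(\Omega)}$) while $s\log s\ge-1/e$ yields $\int_\Omega u_\e(t)\log u_\e(t)\,dx\ge-|\Omega|/e$, one obtains both the supremum bound on the entropy and the bound on the full space-time dissipation integral, which is the claim.

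I expect the control of $\int_0^T\!\int_\Omega\frac{|\gamma'(v_\e)|^4}{\gamma(v_\e)^3}|\nabla v_\e|^4\,dx\,dt$ to be the main obstacle: this is precisely where the structure of the system is needed, since without the comparison-method bound \eqref{ubounds} the coefficient $|\gamma'|^4/\gamma^3$ could degenerate (if $\gamma$ vanishes) and, moreover, the $L^4$ estimate for $\nabla v_\e$ would be out of reach; thus the interplay between the pointwise upper bound for $v_\e$ and the $H^2$-regularity furnished by Lemma~\ref{2.8} is the delicate technical point.
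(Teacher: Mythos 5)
Your proposal is correct and follows essentially the same route as the paper: the same entropy testing with $\log u_\varepsilon$, the same two-step Young splitting producing the three terms $\tfrac12\int\gamma(v_\varepsilon)|\nabla u_\varepsilon|^2/u_\varepsilon$, $\tfrac14\int\gamma(v_\varepsilon)u_\varepsilon^2$, and $\tfrac14\int|\gamma'(v_\varepsilon)|^4\gamma(v_\varepsilon)^{-3}|\nabla v_\varepsilon|^4$, the same Gagliardo--Nirenberg bound on $\|\nabla v_\varepsilon\|_{L^4}$, and the same invocation of the comparison-method bound \eqref{ubounds} together with Lemmas~\ref{2.4} and~\ref{2.8}. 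Your added remarks that $\int_\Omega u_{\varepsilon t}\,dx=0$ justifies the time term and that $s\log s\ge -1/e$ furnishes the needed lower bound on the entropy are correct details that the paper leaves implicit.
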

\begin{lemma}\label{2.6}
	Assume $0\leq \tau <\frac{1}{K_{\gamma}}$. Then for each $T>0$, one can find $C(T)>0$ depending on the initial data, $\gamma$ and $\Omega$ such that
	\begin{equation}
	\int_{0}^{T}\int_{\Omega}\big|\nabla u_{\varepsilon}\big|^{\frac{4}{3}}dxdt\leq C(T)\quad  \text{for    all} \quad   \varepsilon \in  (0,\e_0)\non.
	\end{equation}
	\begin{proof}
		Due to Lemma \ref{2.4} and Lemma \ref{2.5}, we  obtain by  Young's inequality that
		\begin{align}
		\int_{0}^{T}\int_{\Omega}\big|\nabla u_{\varepsilon}\big|^{\frac{4}{3}}dxdt=&\int_{0}^{T}\int_{\Omega}\big|\frac{\nabla u_{\varepsilon}}{\sqrt{u_{\varepsilon}}}\big|^{\frac{4}{3}}\big|\sqrt{u_{\varepsilon}}\big|^{\frac{4}{3}}dxdt \non\\
		\leq &C\int_{0}^{T}\int_{\Omega}\frac{|\nabla u_{\varepsilon}|^{2}}{u_{\varepsilon}}dxdt+ C\int_{0}^{T}\int _{\Omega}u_{\varepsilon}^{2}dxdt\leq C(T).\non
		\end{align}
	\end{proof}
\end{lemma}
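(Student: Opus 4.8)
The plan is to exploit the pointwise algebraic identity $|\nabla u_\e|^{4/3} = \big(|\nabla u_\e|^2/u_\e\big)^{2/3}\,u_\e^{2/3}$, valid wherever $u_\e>0$, and to separate this product by Young's inequality with conjugate exponents $3/2$ and $3$. This gives, a.e.\ in $(0,T)\times\Omega$,
\[
|\nabla u_\e|^{\frac43} \le \frac23\,\frac{|\nabla u_\e|^2}{u_\e} + \frac13\,u_\e^2 .
\]
On the set $\{u_\e=0\}$ one has $\nabla u_\e=0$ a.e.\ (the nonnegative function $u_\e$ attains its minimum there), so the integrand vanishes and the manipulation is legitimate despite the formal division by $u_\e$.

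First I would record that, by the uniform upper bound \eqref{ubounds}, $0\le v_\e\le v^*$ on $\overline\Omega\times[0,T_{\mathrm{max},\e})$, so by \eqref{1-5} the motility is bounded below away from zero: $\gamma(v_\e)\ge k_\gamma:=\min_{s\in[0,v^*]}\gamma(s)>0$, with $k_\gamma$ independent of $\e$. Hence the dissipation controlled in Lemma \ref{2.5} dominates the Fisher-type quantity,
\[
\int_0^T\!\!\int_\Omega \frac{|\nabla u_\e|^2}{u_\e}\,dx\,dt \le \frac1{k_\gamma}\int_0^T\!\!\int_\Omega \gamma(v_\e)\frac{|\nabla u_\e|^2}{u_\e}\,dx\,dt \le C(T).
\]
Next I would integrate the pointwise inequality above over $(0,T)\times\Omega$ and estimate the two resulting terms separately: the first by the displayed bound, and the second directly by $\int_0^T\int_\Omega u_\e^2\,dx\,dt\le C(T)$ from Lemma \ref{2.4}. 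Summing yields the claim with a constant $C(T)$ depending only on the initial data, $\gamma$, $\Omega$ and $T$, uniformly in $\e\in(0,\e_0)$.

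There is essentially no genuine obstacle here: the lemma is a routine consequence of the two preceding uniform energy estimates (Lemmas \ref{2.4} and \ref{2.5}), the only point deserving a word being the degenerate set $\{u_\e=0\}$ noted above. The estimate is, however, the one that supplies the spatial regularity $\nabla u_\e\in L^{4/3}$ needed later to pass to the limit in the weak formulation \eqref{1-7}, so it is placed here as the last of the $\e$-uniform bounds.
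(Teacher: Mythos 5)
Your proof is correct and follows essentially the same route as the paper: split $|\nabla u_\e|^{4/3}$ as $(|\nabla u_\e|^2/u_\e)^{2/3}u_\e^{2/3}$, apply Young's inequality with exponents $3/2$ and $3$, bound the Fisher-type term via the $\e$-uniform lower bound $\gamma(v_\e)\ge k_\gamma>0$ together with Lemma \ref{2.5}, and bound $\int u_\e^2$ via Lemma \ref{2.4}. Your explicit mention of the lower bound on $\gamma$ and of the degenerate set $\{u_\e=0\}$ simply spells out details the paper leaves implicit.
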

\begin{lemma}\label{2.7}
	Assume $0\leq \tau <\frac{1}{K_{\gamma}}$. Then for any $T>0$, one can find $C(T) > 0$ such that
	\begin{equation}
	\int_{0}^{T}\|u_{\varepsilon t}(\cdot,t)\|^{\frac{4}{3}} _{(W^{1,4}(\Omega))^{*}} dt \leq C(T)\quad \text{for  all} \quad  \varepsilon \in (0,\e_0).\non
	\end{equation}
\end{lemma}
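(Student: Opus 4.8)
The plan is to estimate $\langle u_{\e t},\vp\rangle$ for an arbitrary test function $\vp\in W^{1,4}(\Omega)$ by using the weak formulation coming directly from the first equation of \eqref{2-1}. Multiplying $u_{\e t}=\Delta((\gamma(v_\e)+\e)u_\e)$ by $\vp$ and integrating by parts twice (using the no-flux boundary conditions), we obtain
\begin{equation*}
\langle u_{\e t},\vp\rangle=-\int_\Omega \big(\gamma(v_\e)+\e\big)\nabla u_\e\cdot\nabla\vp\,dx-\int_\Omega u_\e\gamma'(v_\e)\nabla v_\e\cdot\nabla\vp\,dx.
\end{equation*}
So the task reduces to bounding the two integrals on the right in terms of $\|\vp\|_{W^{1,4}(\Omega)}$, with the resulting bound lying in $L^{4/3}(0,T)$ with respect to time. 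For the first term, since $0\le\gamma(v_\e)+\e\le K_\gamma+\e_0$, Hölder's inequality with exponents $(4/3,4)$ gives $\left|\int_\Omega(\gamma(v_\e)+\e)\nabla u_\e\cdot\nabla\vp\right|\le (K_\gamma+\e_0)\|\nabla u_\e\|_{L^{4/3}(\Omega)}\|\nabla\vp\|_{L^4(\Omega)}$, and then Lemma~\ref{2.6} ensures $\int_0^T\|\nabla u_\e\|_{L^{4/3}(\Omega)}^{4/3}dt\le C(T)$, which is exactly the integrability we need.

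For the second term, the plan is to use Hölder's inequality in the form $\left|\int_\Omega u_\e\gamma'(v_\e)\nabla v_\e\cdot\nabla\vp\right|\le \|u_\e\gamma'(v_\e)\nabla v_\e\|_{L^{4/3}(\Omega)}\|\nabla\vp\|_{L^4(\Omega)}$, so it suffices to show $\int_0^T\|u_\e\gamma'(v_\e)\nabla v_\e\|_{L^{4/3}(\Omega)}^{4/3}dt\le C(T)$. Since $v_\e$ is bounded above uniformly in $\e$ by \eqref{ubounds}, $|\gamma'(v_\e)|\le K_{\gamma'}$, so this reduces to controlling $\int_0^T\int_\Omega |u_\e|^{4/3}|\nabla v_\e|^{4/3}dx\,dt$. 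Here I would apply Hölder in the spatial variable with exponents $(3/2,3)$ to get $\int_\Omega|u_\e|^{4/3}|\nabla v_\e|^{4/3}dx\le \|u_\e\|_{L^2(\Omega)}^{4/3}\|\nabla v_\e\|_{L^4(\Omega)}^{4/3}$, and then Hölder in time with exponents $(3/2,3)$: $\int_0^T\|u_\e\|_{L^2}^{4/3}\|\nabla v_\e\|_{L^4}^{4/3}dt\le\left(\int_0^T\|u_\e\|_{L^2}^2dt\right)^{2/3}\left(\int_0^T\|\nabla v_\e\|_{L^4}^4dt\right)^{1/3}$. The first factor is bounded by Lemma~\ref{2.4}; the second factor is bounded by the Gagliardo--Nirenberg estimate used in \eqref{nv} together with Lemma~\ref{2.8} (and the uniform $L^\infty$ bound on $v_\e$). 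Combining, $\|u_\e\gamma'(v_\e)\nabla v_\e\|_{L^{4/3}((0,T)\times\Omega)}\le C(T)$, which in particular also records one of the convergence-ready bounds in \eqref{regularity}.

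Putting the two estimates together, $|\langle u_{\e t},\vp\rangle|\le g_\e(t)\|\vp\|_{W^{1,4}(\Omega)}$ with $g_\e\in L^{4/3}(0,T)$ uniformly in $\e$, i.e. $\|u_{\e t}(\cdot,t)\|_{(W^{1,4}(\Omega))^*}\le g_\e(t)$, and raising to the power $4/3$ and integrating gives the claim. I do not anticipate a genuine obstacle here; the only point requiring care is the bookkeeping of Hölder exponents so that everything lands precisely in $L^{4/3}$ in time, and making sure the estimate on the cross term $u_\e\gamma'(v_\e)\nabla v_\e$ uses only quantities already controlled — namely the $L^2_{t,x}$ bound on $u_\e$ from Lemma~\ref{2.4}, the $L^2_tH^2_x$ bound on $v_\e$ from Lemma~\ref{2.8}, and the uniform upper bound \eqref{ubounds} on $v_\e$. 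This also explains why the exponent $4/3$ is the natural one: it is dictated by the interplay of the $L^2$ control on $u_\e$ and the $L^4_{t,x}$ control on $\nabla v_\e$.
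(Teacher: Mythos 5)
Your argument is correct and follows essentially the same route as the paper: integrate by parts once in the weak formulation, bound the $(\gamma(v_\e)+\e)\nabla u_\e$ term via Lemma~\ref{2.6}, and bound the cross term $u_\e\gamma'(v_\e)\nabla v_\e$ via the $L^2_{t,x}$ control on $u_\e$ from Lemma~\ref{2.4} and the $L^4_{t,x}$ control on $\nabla v_\e$ from \eqref{nv}. The only cosmetic difference is that the paper tests against a space-time test function $\psi\in L^4(0,T;W^{1,4}(\Omega))$ and closes both integrals by pointwise Young's inequality (e.g.\ $|u_\e\nabla v_\e\cdot\nabla\psi|\le C(u_\e^2+|\nabla v_\e|^4+|\nabla\psi|^4)$), whereas you test against a fixed $\vp\in W^{1,4}(\Omega)$ and use a two-step Hölder (first in $x$ with exponents $(3/2,3)$, then in $t$) — these are exponent-equivalent manipulations that deliver the same $L^{4/3}$-in-time bound from the same ingredients.
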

\begin{proof}
	Multiplying the first equation in \eqref{2-1} by an arbitrary $\psi \in L^{4}(0,T;W^{1,4}(\Omega))$ with $\|\psi\|_{L^4(0,T;W^{1,4}(\Omega))}=1$, integrating over $\Omega$ we see that
	\begin{align}
	\bigg|\int_0^T\int_{\Omega}u_{\varepsilon t}(\cdot,t)\psi\bigg|
	=&\bigg|\int_0^T\int_{\Omega}\nabla (u_{\varepsilon}(\gamma (v_{\varepsilon})+\varepsilon))\cdot\nabla \psi\bigg|\non\\
	=&\bigg|\int_0^T\int_{\Omega}\left(u_{\varepsilon}\gamma'(v_{\varepsilon})\nabla v_{\varepsilon}+(\gamma(v_{\varepsilon})+\varepsilon)\nabla u_{\varepsilon}\right)\cdot \nabla \psi\bigg|\non\\
	\leq& C\int_0^T\int _{\Omega}\left|u_{\varepsilon}\nabla v_{\varepsilon}\cdot\nabla \psi \right|+C\int_0^T\int_{\Omega}|\nabla u_{\varepsilon}\cdot\nabla \psi|,\non
	\end{align}
	where by Lemma \ref{2.6}
	\begin{equation*}
		\int_{0}^{T} \int _{\Omega}|\nabla u_{\varepsilon}\cdot\nabla \psi| dxdt\leq C \int_{0}^{T}\int_{\Omega}|\nabla u_{\varepsilon}|^{\frac{4}{3}}dxdt +C\int_{0}^{T}\int_{\Omega}|\nabla \psi |^{4}dxdt \leq C(T).
	\end{equation*}
	On the other hand, we can deduce by Lemma \ref{2.4} and \eqref{nv} that
	\begin{align}
	\int_{0}^{T}\int_{\Omega}\bigg|u_{\varepsilon}\nabla v_{\varepsilon}\cdot\nabla \psi \bigg|dxdt
	\leq & C \int_{0}^{T}\int_{\Omega}|\nabla v_{\varepsilon}|^{4}dxdt+C\int_{0}^{T}\int_{\Omega}|\nabla \psi |^{4}dxdt+C\int_{0}^{T}\int_{\Omega}u_{\varepsilon} ^{2}dxdt\non\\
	 \leq& C(T)\non.
	\end{align}
	As a consequence, 
	\begin{equation*}
	\begin{split}
		\| u_{\varepsilon t}(\cdot,t)\|_{L^{\frac43}(0,T;(W^{1,4}(\Omega))^{*})}
		\leq\left|\int_0^T\int_\Omega u_{\e t}\psi dxdt\right|
		\leq C(T).
	\end{split}
	\end{equation*}
\end{proof}

\begin{lemma}\label{2.9} Assume $0\leq \tau<1/K_\gamma.$
	For each $T>0$, one can find $C(T)>0$ such that
	\begin{equation}
	\int_{0}^{T}\int_{\Omega}w_{\varepsilon t}^{2}dxdt\leq C(T)\quad \text{for  all} \quad  \varepsilon \in (0,\e_0).\non
	\end{equation}
	\begin{proof}
	In view of the key identity \eqref{keyide} together with comparison principle of elliptic equations, we infer that
		\begin{center}
			$|w_{\varepsilon t}|^{2}\leq |(I-\Delta)^{-1}[(\gamma(v_{\varepsilon})+\varepsilon)u_{\varepsilon}
			]|^{2}+|(\gamma(v_{\varepsilon})+\varepsilon)u_{\varepsilon}|^{2}\leq (K_{\gamma}+1)^{2}(w_{\varepsilon}^{2}+u_{\varepsilon}^{2})$.
		\end{center}
	Then we obtain  by the Lemma \ref{2.2} and Lemma \ref{2.4} that
		\begin{equation*}
			\int_{0}^{T}\int_{\Omega}w_{\varepsilon t}^{2}dxdt\leq (K_{\gamma}+1)^{2}\left(\int_{0}^{T}\int_{\Omega}w_{\varepsilon}^{2}dxdt+\int_{0}^{T}\int_{\Omega}u_{\varepsilon}^{2}dxdt\right)\leq C(T).
		\end{equation*}
	\end{proof}
\end{lemma}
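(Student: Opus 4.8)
The final statement to prove is Lemma~\ref{2.9}, namely the $\varepsilon$-uniform bound $\int_0^T\int_\Omega w_{\varepsilon t}^2\,dx\,dt\le C(T)$. The plan is to exploit the key identity \eqref{keyide}, which expresses $w_{\varepsilon t}$ purely in terms of the quantity $g_\varepsilon:=(\gamma(v_\varepsilon)+\varepsilon)u_\varepsilon$ and its image under the solution operator $(I-\Delta)^{-1}$. Rearranging \eqref{keyide} gives $w_{\varepsilon t}=(I-\Delta)^{-1}[g_\varepsilon]-g_\varepsilon$, so that pointwise (and hence in $L^2(\Omega)$ after squaring) we can bound $|w_{\varepsilon t}|^2\le 2\,|(I-\Delta)^{-1}[g_\varepsilon]|^2+2\,|g_\varepsilon|^2$; in fact, since $0\le g_\varepsilon\le (K_\gamma+\varepsilon_0)u_\varepsilon$ and the elliptic comparison principle yields $0\le (I-\Delta)^{-1}[g_\varepsilon]\le (K_\gamma+\varepsilon_0)(I-\Delta)^{-1}[u_\varepsilon]=(K_\gamma+\varepsilon_0)w_\varepsilon$, one even gets the cleaner estimate $|w_{\varepsilon t}|^2\le (K_\gamma+1)^2(w_\varepsilon^2+u_\varepsilon^2)$ valid on $\overline\Omega\times[0,T_{\mathrm{max},\varepsilon})$.

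With this pointwise inequality in hand, the second step is simply to integrate over $\Omega\times(0,T)$ and invoke the already-established uniform estimates. The term $\int_0^T\int_\Omega u_\varepsilon^2\,dx\,dt\le C(T)$ is exactly the conclusion of Lemma~\ref{2.4} (using $\gamma(v_\varepsilon)$ bounded below by $k_\gamma$, or directly the stated $L^2$ bound). For the term $\int_0^T\int_\Omega w_\varepsilon^2\,dx\,dt$, Lemma~\ref{2.4} gives $\sup_{0<t<T}\|w_\varepsilon(t)\|_{H^1(\Omega)}^2\le C(T)$, which in particular bounds $\int_0^T\int_\Omega w_\varepsilon^2\le T\sup_{0<t<T}\|w_\varepsilon(t)\|_{L^2(\Omega)}^2\le C(T)$. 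Combining, $\int_0^T\int_\Omega w_{\varepsilon t}^2\,dx\,dt\le (K_\gamma+1)^2\bigl(C(T)+C(T)\bigr)=C(T)$, with the constant independent of $\varepsilon$.

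There is essentially no serious obstacle here: the lemma is a short consequence of the structural key identity \eqref{keyide} and the $L^2_{t,x}$ control of $u_\varepsilon$ and $w_\varepsilon$ obtained earlier. The only points requiring a word of care are (i) checking that the $L^2$-boundedness of the operator $(I-\Delta)^{-1}$ — or, more simply, the elliptic comparison principle — is legitimately applicable to the nonnegative data $g_\varepsilon$, which it is since $u_\varepsilon$ is a nonnegative classical solution; and (ii) making sure the constants $K_\gamma+\varepsilon_0<K_\gamma+1$ are handled uniformly, which holds because $\varepsilon_0<1$ by the choice made just after \eqref{2-2}. This $L^2$ bound on $w_{\varepsilon t}$, together with the $H^1$-in-space bound on $w_\varepsilon$ from Lemma~\ref{2.4}, is precisely what is needed to apply the Aubin--Lions lemma to extract a strongly convergent subsequence of $w_\varepsilon$ in the compactness argument — the role flagged in the introduction — so this lemma is the last of the preparatory uniform estimates before passing to the limit.
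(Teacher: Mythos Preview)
Your proposal is correct and follows essentially the same approach as the paper: rearrange the key identity \eqref{keyide}, bound $|w_{\varepsilon t}|^2$ pointwise by $(K_\gamma+1)^2(w_\varepsilon^2+u_\varepsilon^2)$ via the elliptic comparison principle and $\varepsilon<\varepsilon_0<1$, then integrate and invoke the uniform $L^2$ bounds on $u_\varepsilon$ and $w_\varepsilon$ from Lemma~\ref{2.4} (the paper cites Lemma~\ref{2.2} for the $w_\varepsilon$ bound, but either reference suffices).
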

\subsection{Passage to the limit}
 Since  $(u_{\varepsilon},v_{\varepsilon})$ is a classical solution, for any given $\varphi \in C^{\infty}([0,T]\times \overline{\Omega})$, there holds
 \begin{equation}
 \int_{0}^{T}\int_{\Omega} u_{\varepsilon t}\varphi+\int_{0}^{T}\int_{\Omega}\gamma(v_{\varepsilon})\nabla u_{\varepsilon}\cdot \nabla \varphi+\int_{0}^{T}\int_{\Omega}u_{\varepsilon}\gamma^{'}(v_{\varepsilon})\nabla v_{\varepsilon}\cdot\nabla \varphi=0 \label{2-5}
 \end{equation}
 as well as
 \begin{equation}
 \tau\int_{0}^{T}\int_{\Omega} v_{\varepsilon t}\varphi+\int_{0}^{T}\int_{\Omega}\nabla v_{\varepsilon}\cdot \nabla \varphi-\int_{0}^{T}\int_{\Omega}v_{\varepsilon}\varphi=\int_{0}^{T}\int_{\Omega}f(u_{\varepsilon})\varphi \label{2-6}
 \end{equation}
for all $\varepsilon \in (0,\e_0)$.\\
Summarizing the $\e$-independent estimates obtained in previous part, we have
\begin{equation}
\{u_{\varepsilon}\}_{\varepsilon \in (0,\e_0)}\quad \text{ is bounded in} \quad  L^\infty(0,T;L^1\cap H^{-1}(\Omega))\cap L^{2}(0,T;L^{2}(\Omega))\cap L^{\frac{4}{3}}(0,T;W^{1,\frac{4}{3}}(\Omega)),\non
\end{equation}
\begin{equation}
\{v_{\varepsilon}\}_{\varepsilon \in (0,\e_0)}\quad  \text{ is bounded in} \quad L^{\infty}(0,T;L^{\infty}(\Omega))\cap L^{2}(0,T;H^2(\Omega)),\non
\end{equation}and by \eqref{nv} 
\begin{equation}
\{\nabla v_{\varepsilon}\}_{\varepsilon \in (0,\e_0)}\quad  \text{ is bounded in} \quad L^{4}(\Omega\times (0,T)).\non
\end{equation}
Recalling that by Lemma \ref{2.7}
\begin{equation}
\{u_{\varepsilon t}\}_{\varepsilon \in (0,\e_0)}\quad \text{ is bounded in} \quad L^{\frac{4}{3}}(0,T;(W^{1,4}(\Omega))^{*}),\non
\end{equation}
thanks to the Aubin--Lions Lemma, for any $T>0$, we get the existence of a subsequence (without relabeling)  such  that 
\begin{equation}
u_{\varepsilon}\rightarrow u  \quad  \text{in}\quad   L^{\frac{4}{3}} (\Omega \times (0,T))\quad \text{as} \quad \varepsilon\rightarrow 0\non,
\end{equation}
and hence
\begin{equation}
u_{\varepsilon}\rightarrow u \quad \text{a.e. }  \quad \Omega \times (0,T)\quad \text{as} \quad \varepsilon\rightarrow 0.\non
\end{equation}
Moreover, we can deduce that
\begin{equation}
u_{\varepsilon}\rightharpoonup u  \quad \text{in}  \quad L^{2}(\Omega\times(0,T))\quad \text{as}\quad  \varepsilon\rightarrow 0,\label{2-7}
\end{equation}
\begin{equation*}
u_{\varepsilon}\stackrel{*}{\rightharpoonup} u  \quad \text{in}  \quad L^{\infty}(0,T;L^1\cap H^{-1}(\Omega))\quad \text{as}\quad  \varepsilon\rightarrow 0,
\end{equation*}
\begin{equation}
\nabla u_{\varepsilon} \rightharpoonup \nabla u \quad \text{in} \quad  L^{\frac{4}{3}}(\Omega \times (0,T))\quad \text{as}\quad  \varepsilon\rightarrow 0,\label{2-8}
\end{equation}
and
\begin{equation}\label{weakut}
	u_{\varepsilon t}\rightharpoonup u_t  \quad \text{in}  \quad L^{\frac{4}{3}}(0,T;(W^{1,4}(\Omega))^*)\quad \text{as}\quad  \varepsilon\rightarrow 0
\end{equation}
Next, we aim to show the strong compactness of $v_\e$ in the case $\tau=0$. First, we show that $v_\e$ and $w_\e$ have the same limit.
\begin{lemma}\label{2.10}
	Assume $\tau=0$. For any $T>0$, there holds
	\begin{equation}
	\int_{0}^{T}\|w_{\varepsilon}-v_{\varepsilon}\|^{2}_{H^{1}(\Omega)}dt\rightarrow 0\quad \text{as} \quad  \varepsilon \rightarrow 0.\label{2-9}
	\end{equation}
	\begin{proof}
	A	subtraction of  \eqref{2-3} from the second equation of \eqref{2-1} yields that
		\begin{center}
			$(w_{\varepsilon}-v_{\varepsilon})-\Delta(w_{\varepsilon}-v_{\varepsilon})=u_{\varepsilon}-\frac{u_{\varepsilon}}{1+\varepsilon u_{\varepsilon}}$.
		\end{center}
		Multiplying above equality  by $(w_{\varepsilon}-v_{\varepsilon})$ and integrating the resultant over $\Omega$, we obtain
		\begin{align}
		\int_{\Omega}|w_{\varepsilon}-v_{\varepsilon}|^{2}dx+\int_{\Omega}|\nabla w_{\varepsilon}-\nabla v_{\varepsilon}|^{2}dx=&\int_{\Omega}(u_{\varepsilon}-\frac{u_{\varepsilon}}{1+\varepsilon u_{\varepsilon}})(w_{\varepsilon}-v_{\varepsilon})dx\non\\
		\leq &\|w_{\varepsilon}-v_{\varepsilon}\|_{L^{\infty}(\Omega)}\int_{\Omega} \left|u_{\varepsilon}-\frac{u_{\varepsilon}}{1+\varepsilon u_{\varepsilon}}\right|dx\non\\
		\leq& C(T)\int_{\Omega} \left|u_{\varepsilon}-\frac{u_{\varepsilon}}{1+\varepsilon u_{\varepsilon}}\right|dx.\non
		\end{align}
Since
		\begin{equation*}
			\left|u_{\varepsilon}-\frac{u_{\varepsilon}}{1+\varepsilon u_{\varepsilon}}\right|=\frac{\varepsilon u^{2}_{\varepsilon}}{1+\e u_{\varepsilon}}\leq \varepsilon u^{2}_{\varepsilon},
		\end{equation*}
	we infer by Lemma \ref{2.4} that as $\e\rightarrow0$,
		\begin{equation*}
		\int_0^T\|w_\e-v_\e\|^2_{H^1(\Omega)}\leq C(T)	\int_{0}^{T}\int_{\Omega}\left|u_{\varepsilon}-\frac{u_{\varepsilon}}{1+\varepsilon u_{\varepsilon}}\right|dxdt\leq C(T)\e\rightarrow 0 .
		\end{equation*}
	\end{proof}
\end{lemma}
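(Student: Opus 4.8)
The plan is to use the observation that, when $\tau=0$, the function $v_\varepsilon$ solves exactly the same Helmholtz-type elliptic problem \eqref{2-3} as $w_\varepsilon$, except that its right-hand side is $f_\varepsilon(u_\varepsilon)$ instead of $u_\varepsilon$, and the discrepancy between these two right-hand sides is $O(\varepsilon)$ in a quantitative, pointwise sense. So I would derive a standard energy estimate for the difference $z_\varepsilon:=w_\varepsilon-v_\varepsilon$ directly from subtracting the two elliptic equations, and then close it using the $\varepsilon$-independent $L^\infty$ bound \eqref{ubounds} on $v_\varepsilon,w_\varepsilon$ supplied by the comparison method together with the $\varepsilon$-independent bound $\int_0^T\int_\Omega u_\varepsilon^2\,dx\,dt\le C(T)$ from Lemma \ref{2.4}.

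Concretely, first subtract \eqref{2-3} from the second equation of \eqref{2-1} with $\tau=0$ to get
\begin{equation*}
z_\varepsilon-\Delta z_\varepsilon=u_\varepsilon-\frac{u_\varepsilon}{1+\varepsilon u_\varepsilon}=\frac{\varepsilon u_\varepsilon^2}{1+\varepsilon u_\varepsilon}\geq 0,\qquad \partial_\nu z_\varepsilon=0 \;\text{ on }\partial\Omega.
\end{equation*}
Next, test this identity with $z_\varepsilon$ and integrate over $\Omega$; using the Neumann boundary condition this gives
\begin{equation*}
\|z_\varepsilon\|_{H^1(\Omega)}^2=\int_\Omega\frac{\varepsilon u_\varepsilon^2}{1+\varepsilon u_\varepsilon}\,z_\varepsilon\,dx\leq \|z_\varepsilon\|_{L^\infty(\Omega)}\int_\Omega\frac{\varepsilon u_\varepsilon^2}{1+\varepsilon u_\varepsilon}\,dx.
\end{equation*}
Then bound the two factors separately: by \eqref{ubounds} one has $0\le w_\varepsilon,v_\varepsilon\le v^*$ uniformly in $\varepsilon$, hence $\|z_\varepsilon\|_{L^\infty(\Omega)}\le v^*$, while $\tfrac{\varepsilon u_\varepsilon^2}{1+\varepsilon u_\varepsilon}\le \varepsilon u_\varepsilon^2$, so that $\|z_\varepsilon\|_{H^1(\Omega)}^2\le v^*\varepsilon\int_\Omega u_\varepsilon^2\,dx$. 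Finally, integrate over $(0,T)$ and invoke Lemma \ref{2.4}:
\begin{equation*}
\int_0^T\|w_\varepsilon-v_\varepsilon\|_{H^1(\Omega)}^2\,dt\leq v^*\,\varepsilon\int_0^T\int_\Omega u_\varepsilon^2\,dx\,dt\leq v^*\,C(T)\,\varepsilon\longrightarrow 0\quad\text{as }\varepsilon\to0.
\end{equation*}

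I do not expect a genuine obstacle here; the argument is soft and quantitative rather than requiring any compactness. The one point that must not be glossed over is that the $L^\infty$ control of $z_\varepsilon$ is \emph{uniform in $\varepsilon$}, which is precisely what the comparison-method bounds of Lemma \ref{2.2} and Lemma \ref{2.3} provide — without them the factor $\|z_\varepsilon\|_{L^\infty(\Omega)}$ could not be absorbed. It is also worth recording that the bound is in fact of order $\varepsilon^{1/2}$ in $L^2(0,T;H^1(\Omega))$, and that the subtraction produces a sign-definite, $O(\varepsilon)$ source because $f_\varepsilon(u_\varepsilon)\le u_\varepsilon$; this identification of $w_\varepsilon$ and $v_\varepsilon$ in the limit is exactly what will later let us transfer the strong compactness of $w_\varepsilon$ (obtained from Aubin--Lions via Lemma \ref{2.9}) to $v_\varepsilon$ when $\tau=0$.
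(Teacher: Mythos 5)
Your proof is correct and follows essentially the same route as the paper: subtract the two Helmholtz problems, test the resulting elliptic equation for $z_\varepsilon=w_\varepsilon-v_\varepsilon$ against $z_\varepsilon$, control the $L^\infty$ factor by the $\varepsilon$-uniform comparison bound \eqref{ubounds}, and absorb the $O(\varepsilon)$ source via the $L^2_{t,x}$ bound on $u_\varepsilon$ from Lemma \ref{2.4}. The only cosmetic difference is that you make the sign-definiteness of the source and the explicit $\varepsilon^{1/2}$ rate explicit, which the paper leaves implicit.
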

Now, we may prove the strong compactness of $v_\e$ by showing the strong convergence of $w_\e$ and the latter can be proved due to the Aubin--Lions lemma.
\begin{lemma}\label{2.11}
Assume $\tau=0$. Then there is $v\in L^2(0,T;H^1(\Omega))$ such that
	\begin{equation}
	w_{\varepsilon},v_\e\rightarrow v \quad \text{in}\quad  L^{2}(0,T;H^{1}(\Omega)) \quad \text{as} \quad  \varepsilon \rightarrow 0\label{2-10}
	\end{equation}and
	\begin{equation}\label{wcvh2}
	v_\e\rightharpoonup v \quad \text{in}\quad  L^{2}(0,T;H^{2}(\Omega)) \quad \text{as} \quad  \varepsilon \rightarrow 0
	\end{equation}
\end{lemma}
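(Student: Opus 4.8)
The plan is to deduce the strong compactness of $v_\e$ from that of the auxiliary functions $w_\e$, the latter being obtained via the Aubin--Lions lemma; this detour through $w_\e$ is forced by the lack of time regularity on $v_\e$ when $\tau=0$.

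First I would exploit that $w_\e$ has better spatial regularity than $v_\e$: applying elliptic regularity to the Neumann Helmholtz problem \eqref{2-3} on the smooth bounded domain $\Omega$ gives $\|w_\e(\cdot,t)\|_{H^2(\Omega)}\leq C\|u_\e(\cdot,t)\|_{L^2(\Omega)}$, so Lemma \ref{2.4} shows that $\{w_\e\}_{\e\in(0,\e_0)}$ is bounded in $L^2(0,T;H^2(\Omega))$. Together with Lemma \ref{2.9}, which provides boundedness of $\{w_{\e t}\}_{\e\in(0,\e_0)}$ in $L^2(0,T;L^2(\Omega))$, the compact embedding $H^2(\Omega)\hookrightarrow H^1(\Omega)$ and the continuous embedding $H^1(\Omega)\hookrightarrow L^2(\Omega)$, the Aubin--Lions lemma yields a subsequence (not relabeled) and a limit $v\in L^2(0,T;H^1(\Omega))$ such that $w_\e\to v$ strongly in $L^2(0,T;H^1(\Omega))$.

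Next, Lemma \ref{2.10} gives $\|w_\e-v_\e\|_{L^2(0,T;H^1(\Omega))}\to 0$ as $\e\to0$, so by the triangle inequality $v_\e\to v$ in $L^2(0,T;H^1(\Omega))$ as well, which establishes \eqref{2-10}. For the weak $H^2$ convergence \eqref{wcvh2}, Lemma \ref{2.8} shows that $\{v_\e\}_{\e\in(0,\e_0)}$ is bounded in $L^2(0,T;H^2(\Omega))$; extracting a further subsequence we obtain $v_\e\rightharpoonup\tilde v$ weakly in $L^2(0,T;H^2(\Omega))$, and comparing with the already established strong $L^2(0,T;H^1(\Omega))$ limit forces $\tilde v=v$ by uniqueness of limits. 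Since the weak $H^2$-limit is thereby independent of the chosen subsequence, \eqref{wcvh2} holds along the whole (sub)sequence already fixed above.

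The only delicate point — and the reason the argument is organized this way — is that for $\tau=0$ there is no $\e$-uniform bound on $v_{\e t}$, so Aubin--Lions cannot be applied to $v_\e$ directly; it is the key identity \eqref{keyide} that supplies the missing time-derivative control for $w_\e$ through Lemma \ref{2.9}, and it is the $\e$-uniform upper bounds of $w_\e$ and $v_\e$ furnished by the comparison method, entering via Lemma \ref{2.10}, that allow the compactness of $w_\e$ to be transferred to $v_\e$. Everything else is a routine consequence of the uniform estimates collected in Section~2.3.
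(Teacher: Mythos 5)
Your argument is correct and follows essentially the same route as the paper: elliptic regularity applied to \eqref{2-3} together with Lemma \ref{2.4} gives the $L^2(0,T;H^2)$ bound on $w_\e$, Lemma \ref{2.9} supplies the time-derivative bound, Aubin--Lions yields strong convergence of $w_\e$ in $L^2(0,T;H^1)$, and Lemma \ref{2.10} transfers this to $v_\e$. You additionally spell out the step establishing \eqref{wcvh2} — extracting a weakly convergent subsequence of $v_\e$ in $L^2(0,T;H^2)$ via Lemma \ref{2.8} and identifying the weak limit with $v$ by uniqueness — which the paper's proof leaves implicit; this is a minor but genuine completion rather than a departure.
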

\begin{proof}
	We observe from the elliptic regularity theorem and  Lemma \ref{2.4}  that
	\begin{equation*}
	\{w_{\varepsilon} \}_{\e\in(0,\e_0)}\quad\text{ is    bounded    in} \quad   L^{2}(0,T;H^{2}(\Omega)).
	\end{equation*}
	Due to the Lemma \ref{2.9}, we infer by the Aubin-Lions Lemma  that
	\begin{equation}
	w_{\varepsilon}\rightarrow v \quad \text{in}\quad  L^{2}(0,T;H^{1}(\Omega)) \quad \text{as} \quad  \varepsilon \rightarrow 0.\non
	\end{equation} Then convergence of $v_\e$ follows from Lemma \ref{2.10}.
\end{proof}

On the other hand, when $0<\tau<\frac{1}{K_{\gamma}}$, thanks to Lemma \ref{2.4} \& Lemma \ref{2.8}, we infer by the second equation of \eqref{2-1} that
\begin{equation}
\{v_{\varepsilon t}\}_{\e\in(0,\e_0)}\;\text{ is  bounded in }\; L^{2}(0,T;L^{2}(\Omega))\;\text{and} \; \{v_{\varepsilon}\}_{\e\in(0,\e_0)}\;\text{is bounded in}\; L^{2}(0,T;H^2(\Omega)).\non
\end{equation}Thus, we obtain that
\begin{equation}\label{weakvt}
	\tau v_{\varepsilon t}\rightharpoonup \tau v_{t} \;\;\text{in} \quad \! L^{2}(0,T;L^2(\Omega))\quad \text{as} \quad  \varepsilon \rightarrow 0.
\end{equation}
Applying the Aubin-Lions Lemma again, we have
\begin{equation}
v_{\varepsilon}\rightarrow v \quad \!\text{in} \quad \! L^{2}(0,T;H^{1}(\Omega))\quad \text{as} \quad  \varepsilon \rightarrow 0\label{2-12}
\end{equation}and hence $v_{\varepsilon}\rightarrow v$ a.e. in $\Omega\times(0,T).$ Moreover, since $v_\e$ has a uniform upper bound, we also have
\begin{equation*}
	v_{\varepsilon}\stackrel{*}{\rightharpoonup} v \quad \!\text{in} \quad \! L^{\infty}(0,T;L^\infty(\Omega))\quad \text{as} \quad  \varepsilon \rightarrow 0.
\end{equation*}
Now we are ready to discuss the convergence of nonlinear terms in \eqref{2-5} and \eqref{2-6}.
\begin{lemma}
	For any $T>0$, we have
	\begin{equation}
	\int_{0}^{T}\|\gamma'(v_{\varepsilon})\nabla v_{\varepsilon}-\gamma'(v)\nabla v\|^{2}_{L^{2}(\Omega)}dt\rightarrow 0 \quad \text{as} \quad \varepsilon \rightarrow 0\non
	\end{equation}and 
	\begin{equation*}
	\int_{0}^{T}\int_{\Omega} u_{\varepsilon}\gamma'(v_{\varepsilon})\nabla v_{\varepsilon}\cdot\nabla\varphi\rightarrow\int_{0}^{T}\int_{\Omega}u\gamma'(v)\nabla v\cdot \nabla\varphi \quad\text{ as} \quad   \varepsilon \rightarrow 0.
	\end{equation*}
	\begin{proof}
		First, by uniform boundedness of $|\gamma'(v_\e)|$, we note that
		\begin{align*}
		&\|\gamma'(v_{\varepsilon})\nabla v_{\varepsilon}-\gamma'(v)\nabla v\| _{L^{2}(\Omega\times(0,T))}\\
		\leq &\| \gamma'(v_{\varepsilon})(\nabla v_{\varepsilon}-\nabla v) \|_{L^{2}(\Omega\times (0,T))}  +  \|(\gamma'(v_{\varepsilon})-\gamma'(v))\nabla v\| _{L^{2}(\Omega\times(0,T))}\\
		\leq &C\| \nabla v_{\varepsilon}-\nabla v\| _{L^{2}(\Omega\times(0,T))} + \|(\gamma'(v_{\varepsilon})-\gamma'(v))\nabla v\| _{L^{2}(\Omega\times(0,T))}.
		\end{align*}
	Since  $v_{\varepsilon}\rightarrow v$ a.e. in $\Omega\times(0,T)$, we have
		\begin{equation}
		\gamma'(v_{\varepsilon})\rightarrow \gamma'(v) \quad  \text{a.e. in} \quad \Omega\times (0,T).
		\end{equation}
		By the dominated convergence theorem, we  get
		\begin{align*}
		&\| (\gamma'(v_{\varepsilon})-\gamma'(v))\nabla v\|_{L^{2}(\Omega\times(0,T))}^{2}\\
		=&\int_{0}^{T}\int_{\Omega}| \gamma'(v_{\varepsilon})-\gamma'(v)|^{2}|\nabla v|^{2} \rightarrow 0 \quad \text{as} \quad  \varepsilon \rightarrow 0.
		\end{align*}
Thus the first convergence follows in view of \eqref{2-12}. Then we may complete the proof thanks to \eqref{2-7}.
	\end{proof}
\end{lemma}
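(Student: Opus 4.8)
The plan is to establish the first (strong $L^2$) convergence and then deduce the second from it by a weak--strong product argument. For the first, I would write
\[
\gamma'(v_\e)\nabla v_\e-\gamma'(v)\nabla v=\gamma'(v_\e)(\nabla v_\e-\nabla v)+\big(\gamma'(v_\e)-\gamma'(v)\big)\nabla v
\]
and bound the two summands separately in $L^2(\Omega\times(0,T))$. Since $0\le v_\e\le v^*$ uniformly in $\e$ by \eqref{ubounds} and $\gamma\in C^3[0,\infty)$, there is $K_{\gamma'}>0$ independent of $\e$ with $|\gamma'(v_\e)|\le K_{\gamma'}$; hence the first summand has $L^2$-norm at most $K_{\gamma'}\|\nabla v_\e-\nabla v\|_{L^2(\Omega\times(0,T))}$, which tends to $0$ because $v_\e\to v$ in $L^2(0,T;H^1(\Omega))$ --- this is \eqref{2-12} when $\tau>0$ and \eqref{2-10} (via Lemma \ref{2.11}) when $\tau=0$. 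For the second summand, the a.e. convergence $v_\e\to v$ in $\Omega\times(0,T)$ yields $\gamma'(v_\e)\to\gamma'(v)$ a.e.\ by continuity of $\gamma'$, and since $|\gamma'(v_\e)-\gamma'(v)|^2|\nabla v|^2\le 4K_{\gamma'}^2|\nabla v|^2$ with $\nabla v\in L^2(\Omega\times(0,T))$ (indeed $\nabla v\in L^4(\Omega\times(0,T))$ by \eqref{nv} and weak lower semicontinuity of the norm), the dominated convergence theorem forces it to $0$. Adding the two estimates gives $\gamma'(v_\e)\nabla v_\e\to\gamma'(v)\nabla v$ strongly in $L^2(\Omega\times(0,T))$.

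For the second convergence I would invoke the standard fact that the product of a weakly $L^2$-convergent sequence and a strongly $L^2$-convergent one converges weakly in $L^1$. Concretely, $u_\e\rightharpoonup u$ in $L^2(\Omega\times(0,T))$ by \eqref{2-7}, while $\gamma'(v_\e)\nabla v_\e\to\gamma'(v)\nabla v$ strongly in $L^2(\Omega\times(0,T))$ by the previous step; hence $u_\e\gamma'(v_\e)\nabla v_\e\rightharpoonup u\gamma'(v)\nabla v$ in $L^1(\Omega\times(0,T))$. Testing against $\nabla\varphi$, which is bounded on $[0,T]\times\overline{\Omega}$ since $\varphi\in C^\infty([0,T]\times\overline{\Omega})$, gives the asserted convergence of the integrals.

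The only genuinely delicate ingredient is the strong $L^2$-convergence of $\gamma'(v_\e)\nabla v_\e$, and within it the need for an integrable majorant in the dominated convergence step: this is precisely where the uniform upper bound on $v_\e$ coming from the comparison method (Lemmas \ref{2.3} and \ref{lm24}) and the $L^4$-bound \eqref{nv} on $\nabla v_\e$ enter, furnishing both $|\gamma'(v_\e)|\le K_{\gamma'}$ and $\nabla v\in L^2(\Omega\times(0,T))$. Once these are in place, the splitting, the dominated convergence step, and the weak--strong product are all routine.
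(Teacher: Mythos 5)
Your proposal follows exactly the same decomposition, the same dominated-convergence step with the same majorant furnished by the uniform bound $|\gamma'(v_\e)|\le K_{\gamma'}$, and the same weak--strong product argument that the paper uses; you simply spell out a few justifications (both $\tau$ cases, the explicit dominating function, the $L^1$-weak product lemma) that the paper leaves implicit. This is a correct proof taking essentially the same route.
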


\begin{lemma}
For any $T>0$, we have
\begin{equation}
\int_{0}^{T}\int_{\Omega}\gamma(v_{\varepsilon})\nabla u_{\varepsilon}\cdot\nabla\varphi\rightarrow \int _{0}^{T}\int_{\Omega}\gamma(v)\nabla u\cdot\nabla \varphi  \quad \text{as} \quad \varepsilon \rightarrow 0\non
\end{equation} and
	\begin{equation}
\int_{0}^{T}\int_{\Omega}\frac{u_{\varepsilon}}{1+\varepsilon u_{\varepsilon}}\varphi\rightarrow\int_{0}^{T}\int_{\Omega}u\varphi \quad \text{as} \quad   \varepsilon\rightarrow 0.\non
\end{equation}
\end{lemma}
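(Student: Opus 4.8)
The plan is to establish the two convergences by combining the weak and strong compactness results already collected. For the first convergence, I would write the difference as
\[
\int_0^T\!\!\int_\Omega \gamma(v_\e)\nabla u_\e\cdot\nabla\vp - \int_0^T\!\!\int_\Omega \gamma(v)\nabla u\cdot\nabla\vp
= \int_0^T\!\!\int_\Omega \big(\gamma(v_\e)-\gamma(v)\big)\nabla u_\e\cdot\nabla\vp
+ \int_0^T\!\!\int_\Omega \gamma(v)\nabla u_\e\cdot\nabla\vp - \int_0^T\!\!\int_\Omega \gamma(v)\nabla u\cdot\nabla\vp .
\]
For the second term, since $\vp\in C^\infty([0,T]\times\overline\Omega)$ and $\gamma(v)\in L^\infty(\Omega\times(0,T))$ (by the uniform bound on $v$ and continuity of $\gamma$), the product $\gamma(v)\nabla\vp$ lies in $L^4(\Omega\times(0,T))$, which is the dual exponent to $\tfrac43$; hence this term tends to $0$ by the weak convergence $\nabla u_\e\rightharpoonup\nabla u$ in $L^{4/3}(\Omega\times(0,T))$ established in \eqref{2-8}. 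For the first term, I would use Hölder's inequality with exponents $4$ and $4/3$ to bound it by $\|(\gamma(v_\e)-\gamma(v))\nabla\vp\|_{L^4}\,\|\nabla u_\e\|_{L^{4/3}}$; the second factor is uniformly bounded by Lemma~\ref{2.6}, and the first factor tends to $0$ by the dominated convergence theorem, since $v_\e\to v$ a.e. (from \eqref{2-10} when $\tau=0$, from \eqref{2-12} when $\tau>0$) forces $\gamma(v_\e)\to\gamma(v)$ a.e. by continuity of $\gamma$, and $|(\gamma(v_\e)-\gamma(v))\nabla\vp|^4\le (2K_\gamma\|\nabla\vp\|_\infty)^4\in L^1(\Omega\times(0,T))$.

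For the second convergence I would split
\[
\int_0^T\!\!\int_\Omega \frac{u_\e}{1+\e u_\e}\vp - \int_0^T\!\!\int_\Omega u\vp
= \int_0^T\!\!\int_\Omega \Big(\frac{u_\e}{1+\e u_\e}-u_\e\Big)\vp + \int_0^T\!\!\int_\Omega (u_\e-u)\vp .
\]
The second term tends to $0$ by the weak convergence $u_\e\rightharpoonup u$ in $L^2(\Omega\times(0,T))$ from \eqref{2-7} (noting $\vp\in L^2$). The first term is controlled exactly as in the proof of Lemma~\ref{2.10}: one has $\big|\frac{u_\e}{1+\e u_\e}-u_\e\big|=\frac{\e u_\e^2}{1+\e u_\e}\le \e u_\e^2$, so the integral is bounded by $\e\|\vp\|_{L^\infty}\int_0^T\!\int_\Omega u_\e^2\le C(T)\e\to 0$ by Lemma~\ref{2.4}.

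The main obstacle here is the first convergence, because $\nabla u_\e$ converges only weakly (in $L^{4/3}$), so one cannot simply pass to the limit in the product $\gamma(v_\e)\nabla u_\e$; the key is that the \emph{coefficient} $\gamma(v_\e)$ converges strongly (indeed a.e. with a uniform bound), which lets the standard ``strong times weak'' argument go through once the difference is arranged as above. Everything else is routine once the a.e. convergence of $v_\e$ and the uniform $L^\infty$ bound on $v_\e$ — both already available from the comparison lemmas and the Aubin--Lions arguments — are invoked.
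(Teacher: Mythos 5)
Your proof is correct and takes essentially the same approach as the paper's: for the first convergence the paper shows $\gamma(v_\e)\to\gamma(v)$ strongly in $L^4(\Omega\times(0,T))$ (from a.e.\ convergence, the uniform bound, and dominated convergence) and pairs this with the weak $L^{4/3}$ convergence of $\nabla u_\e$, and your explicit splitting is simply the standard unwinding of that strong-times-weak duality argument. For the second convergence the paper factors $\tfrac{u_\e}{1+\e u_\e}=u_\e\cdot\tfrac{1}{1+\e u_\e}$ and shows $\tfrac{1}{1+\e u_\e}\to 1$ strongly in $L^2$ before invoking $u_\e\rightharpoonup u$ in $L^2$, while you bound $\big|\tfrac{u_\e}{1+\e u_\e}-u_\e\big|\le\e u_\e^2$ directly and then use the same weak convergence; both versions rest on the identical $L^2$ bound on $u_\e$ from Lemma~\ref{2.4} and are equivalent.
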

\begin{proof}
First, we note that $\gamma(v_\e)\rightarrow\gamma(v)$ a.e. in $\Omega\times(0,T)$. Moreover, the uniform boundedness of $\gamma(v_\e)$ together with the dominated convergence theorem entails that 
\begin{equation*}
	\|\gamma(v_\e)\|_{L^p(\Omega\times(0,T))}\rightarrow 	\|\gamma(v)\|_{L^p(\Omega\times(0,T))},\;\;\forall\;1<p<\infty\qquad\text{as}\;\;\e\rightarrow0.
\end{equation*}Thus, we obtain that
\begin{equation*}
\gamma(v_{\varepsilon})\rightarrow \gamma(v) \quad \text{in}  \quad L^{p}(\Omega\times(0,T)),\;\;\forall\;1<p<\infty \quad \text{as} \quad   \varepsilon \rightarrow 0,
\end{equation*}
which together with \eqref{2-8} implies that
\begin{equation}
\int_{0}^{T}\int_{\Omega}\gamma(v_{\varepsilon})\nabla u_{\varepsilon}\cdot\nabla\varphi\rightarrow \int _{0}^{T}\int_{\Omega}\gamma(v)\nabla u\cdot\nabla \varphi  \quad \text{ as} \quad \varepsilon\rightarrow 0.\non
\end{equation}

On the other hand, since \begin{align}
&\int_{0}^{T}\int_{\Omega}\left|\frac{1}{1+\varepsilon u_{\varepsilon}}-1\right|^{2}dxdt
=\int_{0}^{T}\int_{\Omega}\left|\frac{\varepsilon u_{\varepsilon}}{1+\varepsilon u_{\varepsilon}}\right|^{2}dxdt
\leq\varepsilon ^{2}\int_{0}^{T}\int_{\Omega} u_{\varepsilon}^{2}dxdt\non
\end{align}we infer by Lemma \ref{2.4} that
\begin{equation*}
\frac{1}{1+\varepsilon u_{\varepsilon}}\rightarrow 1 \quad  \text{in} \quad  L^{2}(\Omega\times(0,T)) \quad\text{ as} \quad  \varepsilon\rightarrow 0,
\end{equation*}
and thus by \eqref{2-7}
\begin{equation*}
\frac{u_{\varepsilon}}{1+\varepsilon u_{\varepsilon}}\rightharpoonup u \quad  \text{in} \quad  L^{1}(\Omega\times(0,T)) \quad \text{as} \quad  \varepsilon\rightarrow 0.\label{2-17}
\end{equation*}This completes the proof.
\end{proof}

\noindent\textbf {Proof of Theorem \ref{1.1}. }

 Now, we are ready to pass to the limit in \eqref{2-5} and \eqref{2-6} to check that the limit function $(u,v)$ satisfies  \eqref{2-5} and \eqref{2-6} by replacing $(u_\e,v_\e)$. Besides, in view of the obtained uniform estimates obtained above,  $(u,v)$ also satisfies the regularity \eqref{regularity} stated in Theorem \ref{1.1}. Thus $(u,v)$ fulfills \eqref{1-7} and \eqref{1-8} since $C^\infty([0,T]\times\overline{\Omega})$ is dense in $L^4(0,T;W^{1,4}(\Omega))$ as well as $L^2(0,T;L^2(\Omega))$. The initial data can be justified based on the weak convergences \eqref{2-7}, \eqref{weakut}, \eqref{wcvh2}, \eqref{weakvt} together with the uniqueness of limit. This completes the proof.\qed

\bigskip

\end{document}